\documentclass[12pt]{amsart}

\usepackage[margin=1in]{geometry}
\usepackage{amsmath,amssymb,amsthm,mathtools, bbm, color}
\usepackage{enumitem}
\usepackage{microtype}
\usepackage[hidelinks]{hyperref}

\newtheorem{theorem}{Theorem}[section]
\newtheorem{proposition}[theorem]{Proposition}
\newtheorem{lemma}[theorem]{Lemma}
\theoremstyle{definition}
\newtheorem{definition}[theorem]{Definition}
\newtheorem{remark}[theorem]{Remark}
\newtheorem{question}[theorem]{Question}
\newtheorem{corollary}[theorem]{Corollary}
\newtheorem{example}[theorem]{Example}

\newcommand{\R}{\mathbb R}
\newcommand{\Sph}{\mathbb S}
\newcommand{\Sym}{\operatorname{Sym}}
\newcommand{\tr}{\operatorname{tr}}
\newcommand{\Sing}{\operatorname{Sing}}

\newcommand{\cH}{\mathcal H}
\newcommand{\cP}{\mathcal P}

\newcommand{\norm}[1]{\lVert #1\rVert}
\newcommand{\abs}[1]{\lvert #1\rvert}

\newcommand{\epsi}{\varepsilon}

\title{On Singular Sets of Fully Nonlinear Uniformly Elliptic Equations}

\author[B.Z. Chu]{BaoZhi Chu}
\address[B.Z. Chu]{Department of Mathematics, Northwestern University,
2033 Sheridan Road, Evanston, IL 60208}
\email{bzchu@northwestern.edu}

\begin{document}

\begin{abstract}
For continuous viscosity solutions $u$ of fully nonlinear uniformly elliptic equations $F(D^2 u)=0$,
the work of Nadirashvili--Tkachev--Vl\u{a}du\c{t} shows that $u$ need not be $C^2$ in dimensions $n\ge 5$. 
It is therefore natural to ask how large the set $\Sing(u)$, consisting of points at which $u$ has no $C^2$ neighborhood, can be.
Under an additional $C^1$ assumption on $F$, Armstrong--Silvestre--Smart proved that $\Sing(u)$ has Hausdorff dimension at most $n-\varepsilon$ for some $\varepsilon >0$.

In this paper, we show that, in dimensions $n\ge 5$, the Hausdorff dimension of $\Sing(u)$ cannot be bounded away from $n$ under uniform
ellipticity alone. In fact, we prove the stronger statement: $\Sing(u)$ can be any compact nowhere dense set in $\R^5$ modulo a countable set.  Consequently, in every dimension $n\geq 5$, the Hausdorff dimension of $\Sing(u)$ can be any number in $[n-5, n]$; moreover, $\Sing(u)$ can even have positive Lebesgue measure.

In contrast, for every dimension $n$ and every uniformly elliptic
operator $F$, we prove that the set $\Sigma_{2,0}(u)$ of points at which $u$ is not twice differentiable has Hausdorff dimension at most $n-\varepsilon$ for some $\varepsilon>0$. In particular, this quantitatively strengthens Trudinger's theorem that $u$
is twice differentiable almost everywhere. 
More generally, we prove for every $\alpha\in [0,1)$ that  the set $\Sigma_{2,\alpha}(u)$ of points at which $u$ has no $C^{2,\alpha}$ expansion has Hausdorff dimension at most $n-\varepsilon(1-\alpha)$. 
\end{abstract}

\maketitle

\section{Introduction}

In dimension $n\geq 2$, we denote by $\Sym(n)$ the space of real
symmetric $n\times n$ matrices. 
Let $F:\Sym(n)\to \R$ be a \emph{uniformly elliptic} operator. Namely, there exist $\Lambda\ge \lambda>0$ such that
\begin{equation}\label{eq:UE}
\lambda\tr N\leq F(M+N)-F(M)\leq\Lambda\tr N\quad
\text{for every } M\in\Sym(n)\text{ and every }N\geq 0. 
\end{equation}

Let $u$ be a continuous viscosity solution of 
\begin{equation}\label{equ-u}
    F(D^2 u)=0\quad \text{in }B_1,
\end{equation}
where $B_1\subset\R^n$ is the open unit ball centered at the origin. 

A fundamental question in the study of elliptic equations is whether $u$ must be $C^2$. This question can be reformulated as follows. Define 
$$
 \Sing(u):=
 \left\{x\in B_1 \mid \text{there is no neighborhood $U\subset B_1$ of $x$
 such that $u\in C^2(U)$}\right\}.
$$
It follows that $\Sing(u)$ is relatively closed in $B_1$, and $u$ is $C^2$ in $B_1\setminus \Sing(u)$.
\begin{question}\label{Q-1}
    Does every continuous viscosity solution $u$ of \eqref{equ-u} satisfy $\Sing(u)=\varnothing$?
\end{question}
The answer in dimension two is positive due to a classical result of Nirenberg \cite{Nirenberg} in the 1950s. Over the past two decades, after a series of works progressively bringing down the dimension,
Nadirashvili--Tkachev--Vl\u{a}du\c{t} \cite{NTV} gave a negative answer to this question in dimensions five and higher. 
Question \ref{Q-1} remains open in dimensions three and four. An important point in this question is that we do not assume convexity or concavity on $F$, since otherwise $u$ must be $C^{2,\alpha}_{\mathrm{loc}}$ by the Evans--Krylov theorem \cite{Evans-82,Krylov-82}.
For background and related surveys on Question \ref{Q-1}, see, for example, \cite{Caffarelli1995FullyNE, XX-book, Mooney-review, NTV-book, Teixeira-survey} and references therein.

The solution $u$ in \cite{NTV} is given by
\begin{equation}\label{def-w}
 w(x):=
 P(x)/\abs{x},
\end{equation}
where $w(0)=0$, and  $P$ is the five-dimensional \emph{Cartan isoparametric cubic form} 
\begin{equation}\label{cartan-cubic}
 P(x):=x_1^3
 +\frac{3x_1}{2}
 \bigl(z_1^2+z_2^2-2z_3^2-2x_2^2\bigr)
 +\frac{3\sqrt3}{2}
 \bigl(x_2z_1^2-x_2z_2^2+2z_1z_2z_3\bigr).
\end{equation}
Here $x=(x_1,x_2,z_1,z_2,z_3)\in\R^5$. Clearly, $\Sing(w)=\{0\}$. 
To the best of our knowledge, in all previously known five-dimensional
examples, $\Sing(u)$ is either empty or a singleton.

Since we know that in general $\Sing(u)$ may not be empty, it is natural to ask
\begin{question}
    For a continuous viscosity solution $u$ of \eqref{equ-u}, how large can $\Sing(u)$ be? 
\end{question}
An answer to this question is given by Armstrong--Silvestre--Smart under the following additional assumption:
\begin{equation}\label{F-C1}
F\in C^1\text{ and its derivative }DF \text{ is uniformly continuous on }\Sym(n).
\end{equation}
Among other results, they prove 
\begin{theorem}[\cite{ASC}]
\label{thm-asc}
Let $F: \Sym(n)\to \R$ satisfy \eqref{eq:UE} and \eqref{F-C1}. Suppose that $u$ is a continuous viscosity solution of \eqref{equ-u}.  
Then $\dim_{\mathcal H} \Sing(u)\le n-\varepsilon$ for some $\varepsilon>0$ depending only on $n$, $\lambda$, and $\Lambda$. 
\end{theorem}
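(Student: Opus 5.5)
The plan is to \emph{decouple the exponent from the regularity threshold}. The exponent $\varepsilon$ will come only from a measure-decay estimate for the derivatives of $u$, which depends only on $n,\lambda,\Lambda$. The hypothesis \eqref{F-C1} will be used only through Savin's small perturbation theorem, and it only affects a threshold constant $\delta$, never the exponent.

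\emph{Step 1: linearization.} For a unit vector $e$, the difference quotients $(u(x+he)-u(x))/h$ lie in the class $S(\lambda/n,\Lambda)$, by \eqref{eq:UE}. Caffarelli's $C^{1,\alpha}$ theory makes these uniformly bounded, so $u_e\in S(\lambda/n,\Lambda)$ with $\norm{u_e}_{L^\infty(B_{3/4})}\le C\norm{u}_{L^\infty(B_1)}$. The $W^{2,\sigma}$ (Lin/Caffarelli) measure estimate for the class $S$ then gives
\[
\abs{\{x\in B_{1/2}:\Theta(u_e,x)>t\}}\le C t^{-\sigma}\quad\text{for all } t>0,
\]
with $\sigma,C$ depending only on $n,\lambda,\Lambda$. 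Here $\Theta(v,x)$ is the smallest opening of paraboloids touching $v$ from above and below at $x$. By scaling, the same estimate holds in every ball $B_r(x_0)\subset B_{1/2}$ for the rescaled functions.

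\emph{Step 2: non-flatness forces a large bad set (main obstacle).} Let $\delta=\delta(n,\lambda,\Lambda,\omega_{DF})$ be Savin's threshold: if $\norm{u-Q}_{L^\infty(B_r(x))}\le\delta r^2$ for some quadratic $Q$ with $F(D^2Q)=0$, then $u\in C^{2,\alpha}(B_{r/2}(x))$, so $x\notin\Sing(u)$. I want the following contrapositive statement.

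\emph{Claim.} There are $\mu,M$ depending on $\delta$, $n$, $\lambda$, $\Lambda$ with the following property. Suppose that for every $e$, the inequality $\Theta(u_e,\cdot)\le M/r$ holds on all of $B_r(x)$ except a set of measure $\le\mu r^n$, at a scale where $D^2u$ is normalized to be bounded. Then $u$ is $\delta r^2$-close to a quadratic solution on $B_{r/2}(x)$.

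To prove the Claim I would argue by compactness. Normalize $r=1$ and subtract the affine part. If the Claim failed, take a sequence with $\mu_k\to0$. Each $u_e$ is then touched by paraboloids of bounded opening on a set of nearly full measure. Combined with the uniform $C^{1,\alpha}$ bound, this gives a limit $v$ whose derivatives are $C^{1,1}$. That forces $v\in C^{2,1}$, and $v$ solves a limiting equation $F_\infty(D^2v)=0$ (limits of $F$ stay uniformly elliptic). The remaining delicate point is closeness to a single quadratic. The right normalization is to compare $u$ with its quadratic approximation at a scale where the oscillation of the approximate $D^2u$ is small, rather than bounded. So I would pick $M$ small relative to $\delta$ and run the compactness with $\Theta\le \eta/r$, which makes $D^2v$ constant in the limit and yields the contradiction. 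I expect this is where most of the work lies, in particular the uniform control of the exceptional set under limits.

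\emph{Step 3: covering.} Fix $r$ small and take a maximal disjoint family of balls $B_{r}(x_i)$ with $x_i\in\Sing(u)\cap B_{1/4}$. By Step 2, each ball contains a set of measure $\ge\mu r^n$ on which $\Theta(u_e,\cdot)>\eta/r$ for some $e$ among finitely many coordinate directions. By Step 1 the total measure of these sets is at most $C(\eta/r)^{-\sigma}$, so
\[
N\mu r^n\le C\eta^{-\sigma}r^{\sigma}, \qquad\text{hence}\qquad N\le C(\delta) r^{-(n-\sigma)}.
\]
Therefore the upper Minkowski dimension, and hence the Hausdorff dimension, of $\Sing(u)\cap B_{1/4}$ is at most $n-\sigma$. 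Translation and scaling cover all of $B_1$. We take $\varepsilon=\sigma$, which depends only on $n,\lambda,\Lambda$, as claimed; the dependence on the modulus in \eqref{F-C1} enters only the constant $C(\delta)$.
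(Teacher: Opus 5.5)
Your overall architecture matches the Armstrong--Silvestre--Smart route: difference quotients put $u_e$ in the Pucci class, the $W^{2,\sigma}$ estimate is applied to $u_e$, Savin's threshold $\delta$ enters only the constants, and the singular set is controlled by a disjoint-ball count. The paper does not reprove this theorem; it summarizes that route around Proposition~\ref{prop:third-order-tail} and reuses it in Section~\ref{sec:proof-positive-result}.

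The gap is Step~2, which you leave as a compactness sketch that does not work as written. To get compactness you subtract only an affine part and assume $D^2u$ is ``normalized to be bounded'' at scale $r$. In Step~3, however, you must apply the Claim at an arbitrary point of $\Sing(u)$ and at every small $r$. There $u$ is only $C^{1,\gamma}$, and the quadratic that $u$ is close to on $B_r(x)$ may have an arbitrarily large Hessian. Neither that normalization nor the uniform bounds the compactness argument needs are available. The only way to locate the right quadratic is from a good point. Once you have a good point, everything else in the sketch becomes superfluous: the limiting operator $F_\infty$, the control of exceptional sets in the limit, and the assertion that $D^2v$ becomes constant (which in any case fails for fixed $\eta$, giving only oscillation $\lesssim\eta$).

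The missing idea is that a single good point suffices. This is the elementary relation \cite[Lemma~5.1]{ASC} that the paper invokes. Suppose that at one point $y$ each $u_{x_i}$ is trapped between $\ell_i\pm\frac t2\abs{z-y}^2$ on a ball $U$, with $\ell_i$ affine and $\ell_i(y)=u_{x_i}(y)$. Integrating $Du$ along the segment from $y$ to $z$ gives a quadratic $Q$ with
\[
\abs{u(z)-Q(z)}\le C_n t\abs{z-y}^3\quad\text{for all } z\in U .
\]
Moreover, $Q\pm C_nt\abs{\cdot-y}^3$ are $C^2$ test functions touching $u$ at $y$ from above and below, with Hessian $D^2Q$ at $y$. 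Hence $F(D^2Q)=0$ exactly, and no adjustment is needed to obtain a quadratic solution.

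Now take $x\in\Sing(u)$ and $t=c_n\delta/r$. Then every $y\in B_{r/2}(x)$ satisfies $\max_i\Theta(u_{x_i},y)>c_n\delta/r$; otherwise $u$ would be $\delta r^2$-close on $B_r(x)$ to a quadratic solution, and Savin's theorem would make $x$ a regular point. So the whole half ball is bad, exactly as in Lemma~\ref{lem:bad-scale}. Your count in Step~3 then goes through with $\mu$ replaced by $\abs{B_{1/2}}$, and gives $\varepsilon=\sigma$.

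Your Claim is in fact true, but for this simple reason. Once $n\mu<\abs{B_1}$, the $n$ exceptional sets cannot cover $B_r(x)$, so there is a point that is good for every $e_i$ at once.
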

 Here $\dim_{\mathcal H}E$ denotes the standard Hausdorff dimension of the set $E$.
 Recall that the uniform ellipticity of $F$ alone implies that $F$ is Lipschitz continuous, 
but it does not imply the $C^1$ regularity required in \eqref{F-C1}. For example, Isaacs operators are generally not $C^1$ and
need not be either convex or concave.

\subsection{Compact nowhere dense singular sets}
It is an important open problem whether the Hausdorff dimension
estimate in Theorem \ref{thm-asc} remains valid without the assumption \eqref{F-C1}; see \cite[\S 4.5]{XX-book} for a survey.  
In the first part of this paper, we show that, for general uniformly elliptic $F$ without \eqref{F-C1}, the singular set can be large, and in particular, we give a negative answer to this open problem.
In fact, we prove a much stronger result: for every nonempty compact nowhere dense set $K\subset\R^5$, one can have $\Sing(u)=K\cup A$, where $A$ is countable. Our first main result is as follows.
\begin{theorem}\label{main-thm}
Let $K\subset B_1$ be any nonempty compact nowhere dense set in $\R^5$.  Then there
exist a countable set $A\subset B_1\setminus K$, a function
$u\in C^{1,1}(B_1)$, and a uniformly elliptic operator
$F:\Sym(5)\to\R$ such that $\Sing(u)=K\cup A$, and \eqref{equ-u} holds in the viscosity sense. 
Moreover, the following hold.
\begin{enumerate}[label=\textup{(\roman*)}]

\item The set of accumulation points of $A$ in $\R^5$ is exactly $K$, and every point
of $A$ is isolated in $\Sing(u)$.  

\item The function $u$ is smooth in $B_1\setminus \Sing(u)$ and is not twice
differentiable at any point of $A$.  At every $x\in K$, $u$ is
twice differentiable with
$D^2 u(x)=D^2 P(x)$, and 
$$ 
 u(x+h)
 =
 P(x)+DP(x)\cdot h+ 2^{-1} h^T D^2P(x) h
 +O(\abs{h}^3)
 \quad\text{as }h\to 0,
$$
where the constant in $O(\abs{h}^3)$ is independent of $x\in K$. 

\item The ellipticity constants of $F$ are
independent of $K$.
\end{enumerate}

\end{theorem}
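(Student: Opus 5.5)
We build $u$ as $P$ plus a superposition of rescaled copies of the Nadirashvili--Tkachev--Vl\u{a}du\c{t} singular solution $w$ from \eqref{def-w}, placed at points accumulating exactly on $K$. Then we construct $F$ from the set of Hessians that $u$ realizes, so that $F$ is uniformly elliptic. The guiding observation is that ellipticity only sees which Hessian values occur and how they are ordered. It does not see where they occur. So if every piece of $u$ has Hessians lying in a fixed ``admissible'' set of matrices, the ellipticity constants can be chosen independently of $K$, as required in (iii).

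\textbf{Step 1 (Whitney decomposition and placement of singular points).} Cover $B_1\setminus K$ by Whitney cubes $Q_j$, with $\operatorname{diam} Q_j\simeq \operatorname{dist}(Q_j,K)$. Because $K$ is nowhere dense, every point of $K$ is a limit of Whitney cubes. In each $Q_j$ choose a center $a_j$ and set $A=\{a_j\}$. Then $A$ is countable, discrete in $B_1\setminus K$, and its set of accumulation points is exactly $K$. This gives (i), provided $u$ is singular exactly at the points $a_j$ and at $K$.

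\textbf{Step 2 (local gluing).} On each $Q_j$, with $r_j=\operatorname{diam} Q_j$, define
\[
u = P + \eta_j\, c\, r_j^{2}\, w\bigl((x-a_j)/r_j\bigr) + \text{(correction)},
\]
where $\eta_j$ is a cutoff from a partition of unity subordinate to slightly enlarged cubes and $c$ is a small fixed constant. The aim is to satisfy two requirements at once:
\begin{itemize}
\item $D^2u$ stays within $O(r_j)$ of $D^2P(a_j)$ on $Q_j$, up to the fixed bounded perturbation coming from $D^2w$, which is $0$-homogeneous and bounded;
\item the third derivatives of $u$ are $O(r_j^{-1})$ away from $a_j$.
\end{itemize}
Since $P$ is a cubic, $D^2P$ varies by $O(r_j)$ across $Q_j$. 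So at any point $x\in K$ we get
\[
|u(x+h)-\text{Taylor}_2P(x,h)|\lesssim \sum_{j:\,Q_j\cap B_{|h|}(x)\ne\varnothing} r_j^{2}\, r_j \lesssim |h|^3,
\]
using $r_j\lesssim |h|$ for those cubes. This gives (ii) and the $C^{1,1}$ bound. One needs $u=P$ on $K$, which holds because the perturbations are supported in $B_1\setminus K$. Twice non-differentiability at $a_j$ follows from that of $w$ at $0$. Smoothness elsewhere follows because $w$ is smooth away from $0$.

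\textbf{Step 3 (constructing $F$; the main obstacle).} We must find one uniformly elliptic $F$ with $F(D^2u)=0$ everywhere, in the viscosity sense at $a_j$ and at $K$. The approach is to show that the Hessian set
\[
\mathcal S=\{D^2u(x)\}
\]
satisfies the uniform ellipticity compatibility condition: for $M,N\in\mathcal S$, the matrix $M-N$ is never a nonzero matrix with $\lambda'$-dominant positive or negative part, i.e. $\lambda'\|(M-N)^+\|\le\|(M-N)^-\|\le \|(M-N)^+\|/\lambda'$. Given this, an extension lemma (of Nadirashvili--Vl\u{a}du\c{t} type) produces $F$ with constants depending only on $\lambda'$, and hence with $F=0$ on the closure of $\mathcal S$.

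The difficulty is that Hessians from different cubes and from the cutoff regions must all be mutually compatible. The plan is to make $c$ small and keep the Hessians from separate cubes compatible. Hessians of $P$ are trace-free; the NTV $w$ is compatible with them, which is the mechanism of \cite{NTV}. The cutoff errors are $O(c)$. I would handle them by choosing $\eta_j$ so that on each annulus $u$ is exactly $P+c r_j^2 w_{a_j,r_j}$ or exactly $P$, and letting the transition absorb a harmonic or Hessian-compatible correction. The hardest point is controlling the pairs of Hessians from far-apart points, where the difference is dominated by $D^2P(x)-D^2P(y)$. This term is trace-free with comparable positive and negative parts only if $P$'s Hessian range is ``uniformly hyperbolic''. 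I would verify this using the isoparametric structure of the Cartan cubic: $|D^2P|$ is comparable to $|x|$, and the eigenvalues of $D^2P$ are $(\ldots)$ symmetric. The perturbation must stay relatively small, which may force the perturbation amplitude to be proportional to $|D^2P(a_j)|$. Finally, viscosity verification at $a_j$ and at $K$ follows by a standard argument, since $u\in C^{1,1}$ and $F(D^2u)=0$ a.e. together with stability imply that $u$ is a viscosity solution.
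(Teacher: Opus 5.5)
\textbf{Genuine gap: the bump amplitude in Step~2.} The amplitude you chose breaks both (ii) and Step~3. Since $w$ is homogeneous of degree two, $c\,r_j^2w((x-a_j)/r_j)=c\,w(x-a_j)$. So each bump changes the Hessian by a fixed amount of order $c$, and its third derivatives are of size $c/r_j$, as you note yourself.

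Consequences for (ii): at $x+h\in Q_j$ with $x\in K$, the perturbation is of order $c\,r_j^2$. For $K=\{0\}$ this is $\simeq c\abs{h}^2$ along a sequence $h\to0$. Your bound $\sum r_j^2\cdot r_j$ tacitly assumes amplitude $r_j^3$. So the Taylor remainder at points of $K$ is not $O(\abs{h}^3)$, and in general $u$ is not twice differentiable there.

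Consequences for Step~3, which is the more serious problem: across a cube of size $r_j$, $D^2P$ changes only by $O(r_j)$, while the bump changes $D^2u$ by order $c$. The background $P$ is therefore invisible at the scale of the bumps. Blow $u$ up at $a_j$ at scale $r_j$, after subtracting the quadratic Taylor polynomial of $P$. The limit profile $\phi$ has these properties:
\begin{itemize}
\item it is nonzero, since it equals $c\,w$ near $0$;
\item it vanishes on the region where you arrange $u=P$ around the bump;
\item by stability it solves $F(M_0+D^2\phi)=0$, where $M_0=\lim D^2P(a_j)$ and $F(M_0)=0$.
\end{itemize}
The comparison principle on a ball enclosing the bump then forces $\phi\equiv0$, a contradiction. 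This is the paper's remark that a pure sum of localized profiles cannot lie on the zero set of a uniformly elliptic operator. Taking the amplitude proportional to $\abs{D^2P(a_j)}\sim\abs{a_j}$ does not help, because it does not shrink with $r_j$.

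\textbf{The missing idea.} Use the bump $\eta\,r_j^3V((x-a_j)/r_j)$, i.e.\ the paper's $\varepsilon_a=\eta R_a$, with pairwise disjoint supports. Then bump Hessians are $O(\eta r_j)$, comparable to the variation of $D^2P$ at that scale, and they are $L\eta$-Lipschitz away from the core $B(a_j,r_j/4)$. At $x\in K$ only one bump is nonzero at $x+h$, and $\abs{h}\ge 9r_j$ there, which gives the uniform $O(\abs{h}^3)$ bound.

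Compatibility of Hessians then splits into two cases.
\begin{itemize}
\item For $x,y$ not in a common core, $D^2u(x)-D^2u(y)=D^2P(x-y)+O(L\eta\abs{x-y})$, and \eqref{background-Hessian-estimate} finishes the argument for small $\eta$.
\item For $x=a+re$ and $y=a+\rho f$ in a common core, the difference is exactly $\varepsilon_a[K(e,r/\varepsilon_a)-K(f,\rho/\varepsilon_a)]$, where $K(e,s)=D^2w(e)+sD^2P(e)$. This case needs the uniform hyperbolicity of the augmented family for all $s,t\ge0$, which is Proposition~\ref{augmented-family-proposition}, proved via orbit reduction and explicit eigenvalue formulas. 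The NTV estimate covers only $s=t=0$, and nothing in your plan replaces this step.
\end{itemize}

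\textbf{Smaller points.}
\begin{itemize}
\item A Whitney decomposition of $B_1\setminus K$ accumulates at $\partial B_1$, which violates (i).
\item When $\abs{K}>0$, your ``$F(D^2u)=0$ a.e.'' argument needs $D^2u(x)\in\overline{D^2u(\Omega)}$ for points of $K$. The paper checks this by approaching $x$ through points just outside the bumps.
\item Passing from a.e.\ solutions to viscosity solutions for $C^{1,1}$ functions is an ABP argument, not a stability argument. The paper instead uses a countable removable-singularity lemma.
\end{itemize}
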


By choosing the set $K$ in Theorem \ref{main-thm}, we immediately obtain the following:

\begin{corollary}
    For every real number $0 \le s\le 5$, there exist a function $u\in C^{1,1}(B_1)$ and a uniformly elliptic operator $F:\Sym(5)\to \R$ such that \eqref{equ-u} holds in the viscosity sense, and
    $$
 \dim_{\mathcal H}\Sing(u)=s.
$$
Moreover, when $s>0$, the set $\Sing(u)$ may be chosen to have either
zero or positive finite $s$-dimensional Hausdorff measure.
\end{corollary}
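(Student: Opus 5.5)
The corollary will follow from Theorem \ref{main-thm} by choosing $K$ appropriately and noting that adding a countable set $A$ changes neither Hausdorff dimension nor $\mathcal H^s$-measure for $s>0$. Indeed, $\dim_{\mathcal H}(K\cup A)=\max(\dim_{\mathcal H}K,\dim_{\mathcal H}A)=\dim_{\mathcal H}K$ because $\dim_{\mathcal H}A=0$. Likewise, $\mathcal H^s(K\cup A)=\mathcal H^s(K)$ for $s>0$, since countable sets are $\mathcal H^s$-null. So it suffices to produce, for each $s\in[0,5]$, a nonempty compact nowhere dense $K\subset B_1\subset\R^5$ with $\dim_{\mathcal H}K=s$, together with the prescribed measure behavior when $s>0$.

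I will handle the cases in this order:
\begin{enumerate}
\item[(a)] $s=0$: take $K=\{0\}$, or more generally any finite set. Then $\Sing(u)=K\cup A$ is countable and has dimension $0$.
\item[(b)] $0<s<5$ with $0<\mathcal H^s(K)<\infty$: take a self-similar Cantor set. Fix $N\ge2$ and a ratio $r\in(0,1/2)$ solving $Nr^s=1$, and place $N$ disjoint similar copies of a small closed cube inside it. This is possible once $N$ is large relative to $s$: for $s<5$ take $N=m^5$ subcubes arranged in an $m\times\cdots\times m$ grid with gaps, with ratio $r=N^{-1/s}<1/m$. The attractor $K$ satisfies the open set condition, so Hutchinson's theorem gives $\dim_{\mathcal H}K=s$ and $0<\mathcal H^s(K)<\infty$. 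It is totally disconnected, hence nowhere dense, and after scaling it lies in $B_1$.
\item[(c)] $0<s<5$ with $\mathcal H^s(K)=0$: take a countable union $K=\{0\}\cup\bigcup_j K_j$. Here $K_j$ are Cantor sets as in (b) of dimensions $s_j\uparrow s$, scaled into disjoint small balls accumulating only at $0$, with diameters tending to $0$. This set is compact, and it is nowhere dense because a countable union of closed nowhere dense sets cannot contain an open set, by Baire's theorem applied within any ball. It has $\dim_{\mathcal H}K=\sup_j s_j=s$ and $\mathcal H^s(K)=\sum_j\mathcal H^s(K_j)=0$.
\item[(d)] $s=5$: a compact nowhere dense set of positive Lebesgue measure, a fat Cantor set, gives $0<\mathcal H^5(K)<\infty$. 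A union as in (c) with dimensions $s_j\uparrow5$ gives $\mathcal H^5(K)=0$.
\end{enumerate}

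For the fat Cantor set, take $C\times\cdots\times C$ with $C\subset[0,1]$ a Smith–Volterra–Cantor set of positive length. The product is compact, has empty interior, and has Lebesgue measure $\abs{C}^5>0$. Since Lebesgue measure is comparable to $\mathcal H^5$ in $\R^5$, the measure is finite and positive.

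None of the steps is a real obstacle. The only point needing care is the precise claim ``zero or positive finite'' at the endpoint $s=5$ and in case (c), which the countable-union and fat-Cantor constructions address. Throughout, the theorem guarantees that $u\in C^{1,1}(B_1)$ and that $F$ is uniformly elliptic, so nothing further needs to be checked.
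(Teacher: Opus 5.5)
Your proposal is correct and follows the paper's route. The paper obtains the corollary directly by choosing $K$ in Theorem~\ref{main-thm}. You fill this in correctly with standard choices of $K$: self-similar Cantor sets satisfying the open set condition, countable unions of such sets accumulating at a point, and a product of fat Cantor sets, together with the fact that the countable set $A$ changes neither $\dim_{\mathcal H}$ nor $\mathcal H^s$ for $s>0$. In case (b), any $m\ge 2$ already works, since $r=m^{-5/s}<1/m$ whenever $s<5$.
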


By adding dummy variables, this corollary shows that in every dimension $n\ge 5$, the Hausdorff dimension of $\Sing(u)$ can be any number in $[n-5,n]$. In particular,
the conclusion of Theorem \ref{thm-asc} fails for general uniformly elliptic $F$ without the assumption \eqref{F-C1} in dimensions five and higher. 

\medskip

Let us briefly describe our construction in Theorem \ref{main-thm}. We first choose a countable set
$A\subset B_1\setminus K$ whose set of accumulation points is exactly
$K$. 
For every $a\in A$,  
we choose pairwise disjoint balls $\overline{B(a,R_a)}\subset B_1\setminus K$ whose
radii decay sufficiently rapidly as their centers approach $K$.  Let
$V(x):=\chi(|x|) w(x)$, where $\chi$ is a fixed cutoff, and define
$$ \textstyle
 u=P+ \eta \sum_{a\in A}  R_a^3 
 V\bigl( R_a^{-1} (\cdot -a ) \bigr),
$$
where $\eta>0$ is a sufficiently small universal constant.
Each localized profile makes $u$ non-twice-differentiable at its center $a\in A$. On the other hand, the scaling and the geometry of the support balls ensure that the localized
sum has zero Hessian at every point of $K$.  Since the set of accumulation points of $A$ is exactly $K$, every point of $K$ nevertheless belongs to $\Sing(u)$. Consequently, $\Sing(u)=K\cup A$. 

Let $$\Omega:= B_1\setminus\Sing(u)\quad  \text{and}
\quad \mathcal H:=\overline{D^2u(\Omega)}.$$ 

Here and throughout the paper, for $M\in \Sym(n)$, we use its Frobenius norm
$\norm{M}_F:=(\tr(M^2))^{1/2}$, and order its eigenvalues as
$\lambda_1(M)\geq\cdots\geq\lambda_n(M)$.

One key step of our construction is to establish the following estimate: There is a universal constant $\theta>0$ such that
$$
 \lambda_1(X-Y)\geq\theta\norm{X-Y}_F
 \quad\text{for all }X,Y\in\mathcal H.
$$
In our estimate for the Hessian image, the orbit reduction property of $P$, which is used as in \cite{NTV}, reduces the analysis of the augmented Hessians 
$D^2 w(e)+ sD^2 P(e)$ to a lower-dimensional problem.

In contrast to the abstract operator construction in \cite{NTV, NV-gafa},
we define $F$ explicitly on $\Sym(5)$.
The preceding estimate, together with the compactness of $\mathcal{H}$, allows us
to define
$$ \textstyle
 F(M):=\min_{X\in\mathcal H}
 \bigl\{\tr(M-X)+\sqrt{5}\theta^{-1}\lambda_1(M-X)\bigr\},\quad M\in \Sym(5).
$$
The ellipticity constants of $F$ are universal, and in particular are independent of $K$. Since $F$ vanishes on $\mathcal H$, the equation $F(D^2 u)=0$ holds classically on $\Omega$.  We next show that
$D^2u(x)\in\mathcal H$ for every $x\in K$, even though $K\subset\Sing(u)$, and hence the equation holds
pointwise on $K$.  Finally, a removable singularity result for countable
sets extends the equation across $A$ in the viscosity sense.

In contrast
to \cite{NTV}, where a single homogeneous singular profile $w$ is used,
our construction localizes infinitely many copies of that profile at
different centers and scales. 
The background term $P$ in the definition of $u$ is indispensable in our construction.  Without
it, the pure sum of localized profiles has a smooth point at which its Hessian
vanishes and another smooth point at which its Hessian is nonzero and
negative semidefinite.  Uniform ellipticity prevents these two Hessians
from lying on the same zero level set of any uniformly elliptic operator $F$.

In fact, our proof shows that one may take $\theta=44^{-1}$ in the above construction. Thus the operator $F$ constructed above is uniformly elliptic with
 $\lambda=1$ and $\Lambda=1+44\sqrt{5}$. Note that if $\Lambda/\lambda$ is sufficiently close to $1$, then Cordes--Nirenberg type estimates imply that every continuous viscosity solution $u$ of \eqref{equ-u} must be $C^{2,\alpha}_{\mathrm{loc}}$ for some $\alpha\in (0,1)$.

\subsection{Hausdorff dimension bounds for pointwise singular sets}
In dimensions $n\ge 5$, we have shown that, without additional assumptions on $F$, the Hausdorff dimension of
$\Sing(u)$ cannot be bounded away from the ambient dimension $n$. 
Meanwhile, Theorem \ref{main-thm} exhibits a distinction between local $C^2$ regularity and
pointwise twice differentiability: every point of the prescribed compact nowhere dense
set $K$ belongs to $\Sing(u)$, but $u$ is still twice differentiable there.
This motivates the following pointwise singular sets.

\begin{definition}
\label{def:expansions}
Let $0\leq\alpha<1$, and let $x\in B_1$.  We say that a continuous
function $u$ in $B_1$ has a \emph{$C^{2,\alpha}$ expansion at $x$} if there
exist $p_x\in\R^n$ and $A_x\in\Sym(n)$ such that
$$
 u(x+h)=u(x)+p_x\cdot h+ 2^{-1} h^TA_xh+o(\abs{h}^{2+\alpha})
 \quad\text{as }h\to 0.
$$
We define
$$
 \Sigma_{2,\alpha}(u)
 :=\left\{x\in B_1 \mid \text{$u$ has no $C^{2,\alpha}$ expansion at $x$}\right\}.
$$
\end{definition}
It follows that
$\Sigma_{2,\alpha}(u)\subset\Sigma_{2,\beta}(u)$ whenever $0\leq\alpha<\beta<1$, and 
$\Sigma_{2,0}(u)\subset\Sing(u)$.
 
When $\alpha=0$, our notion of
$C^{2,0}$ expansion is exactly what is called
\emph{punctual second order differentiability} in
\cite[Definition~1.4]{Caffarelli1995FullyNE}.
For a $C^{1,1}$ function, the existence of a $C^{2,0}$ expansion at
a point implies twice differentiability at that point; see
Lemma~\ref{prop:punctual-versus-twice}.  For $C^{1,\alpha}$ functions
with $\alpha<1$, however, this implication fails in general; see
Example~\ref{ex-b2}.  On the other hand, for continuous viscosity
solutions of \eqref{equ-u}, these two notions are equivalent by
Lemma~\ref{lem:punctual-implies-second}.  Consequently, for every solution $u$ considered in this paper,
$$
 \Sigma_{2,0}(u)
 =
 \bigl\{x\in B_1\mid
 \text{$u$ is not twice differentiable at $x$}\bigr\}.
$$

The set $\Sigma_{2,\alpha}(u)$ is a $G_{\delta\sigma}$ set by Lemma \ref{Lem-G-set}. 
It need not, however, be
relatively closed in $B_1$, even when $u$ is a $C^{1,1}$ viscosity solution of \eqref{equ-u}. Indeed, let 
$u$ be a solution constructed in
Theorem \ref{main-thm}. Then, for every $\alpha\in [0,1)$, we have
$$\Sigma_{2,\alpha}(u)=A \quad \text{but}\quad 
\overline{\Sigma_{2,\alpha}(u)}
=A\cup K
 \neq
 \Sigma_{2,\alpha}(u).$$ Hence, $\Sigma_{2,\alpha}(u)$ is not relatively closed in $B_1$. 
 Moreover, noting that $\Sing(u)=A\cup K$, this example also indicates that $\Sigma_{2,\alpha}(u)$ can remain very small even when $\dim_\cH\Sing(u)=n$. 
Indeed, we establish a positive Hausdorff codimension bound for $\Sigma_{2,\alpha}(u)$ for each fixed $\alpha<1$. 
\begin{theorem}
\label{thm:main-2}
Let $F:\Sym(n)\to\R$ satisfy \eqref{eq:UE}, and let $u$ be a continuous
viscosity solution of \eqref{equ-u}.  Then there exists
$\varepsilon\in(0,1]$, depending only on $n$, $\lambda$, and $\Lambda$, such that
$$
 \dim_{\mathcal H}\Sigma_{2,\alpha}(u)
 \leq n-\varepsilon(1-\alpha)\quad \text{for every}\,\,\, 0\leq\alpha<1.
$$
\end{theorem}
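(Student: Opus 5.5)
The plan is to use the $W^{2,\varepsilon}$ estimate in its quantitative form, with the exponent $\varepsilon$ from Caffarelli/Armstrong--Silvestre--Smart, and to combine it with a covering argument. No regularity of $F$ is required. Since $F(D^2u)=0$, $u$ lies in the class $S(\lambda,\Lambda)$, and so do the incremental quotients and the differences $u-\ell$ for affine $\ell$. The key tool is the power decay for the ``$\Theta$'' function (Caffarelli's $W^{2,\varepsilon}$ lemma, \cite{Caffarelli1995FullyNE}). Normalize so that $\norm{u}_{L^\infty(B_1)}\le 1$ and work in $B_{1/2}$. Let $\Theta(x)$ denote the smallest $M$ such that $u$ can be touched at $x$ from above and from below by paraboloids of opening $M$ on $B_{1/2}$. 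The estimate gives $|\{\Theta>t\}\cap B_{1/2}|\le Ct^{-\varepsilon_0}$.

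The first step is a rescaled version of this decay. For each ball $B_r(x_0)\subset B_{1/2}$, apply the estimate to $v(y)=r^{-2}(u(x_0+ry)-\ell(x_0+ry))$ after subtracting the best quadratic polynomial. Here we use that $F(D^2 u)=0$ is invariant under adding a quadratic polynomial $Q$ for the shifted operator $F(\cdot+D^2Q)$, which has the same ellipticity constants. This gives a multiscale statement. Let $\omega(x_0,r):=\inf_{Q}r^{-2}\norm{u-Q}_{L^\infty(B_r(x_0))}$, where $Q$ ranges over quadratic polynomials. Then, outside a set of measure at most $C t^{-\varepsilon_0}|B_r|$ in $B_{r/2}(x_0)$, the function $u$ admits touching paraboloids of opening $t\,\omega(x_0,r)$ at every scale below $r$. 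The second ingredient, which is the heart of the matter, is the $C^{2,\alpha}$-improvement at such points. Following Armstrong--Silvestre--Smart's scheme for $\Sing$ but replacing $C^1$-ness of $F$ with a pointwise argument, I would aim to show the following. If $x$ has two-sided touching paraboloids of bounded opening at all dyadic scales, with openings decaying geometrically in the rescaled sense, then $u$ has a $C^{2,\alpha}$ expansion at $x$. This should follow from Caffarelli's pointwise $C^{2,\alpha}$ theory: pointwise $C^{1,1}$ from both sides, combined with a compactness argument (limits of blow-ups solve a translation-invariant equation with $C^{1,1}$ solutions), yields twice differentiability, by the fact that Lemma~\ref{lem:punctual-implies-second} relates punctual and classical twice differentiability. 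For $\alpha>0$ one needs the quantitative rate $\omega(x,2^{-k})\lesssim 2^{-k\alpha}$.

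Next comes the counting and covering step, which is where the exponent $\varepsilon(1-\alpha)$ appears. Call a dyadic cube $Q$ of side $2^{-k}$ \emph{bad} if the normalized oscillation $\omega$ fails to improve by the factor $2^{-\alpha}$ from its parent. The decay estimate, applied with $t\approx 2^{1-\alpha}$ at each generation, bounds the fraction of children of a cube that are bad by $C2^{-(1-\alpha)\varepsilon_0}$-type quantities. Iterating this gives the following bound: the number of cubes of generation $k$ containing points where the improvement fails infinitely often on a positive proportion of scales is at most $2^{k(n-\varepsilon(1-\alpha))}$. The set $\Sigma_{2,\alpha}(u)$ is contained in the union over such limsup configurations, since at points where improvement holds at all but a sparse set of scales, the rate $\omega(x,2^{-k})=o(2^{-k\alpha})$ gives the expansion. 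Summing these cube counts yields the Hausdorff bound, and $\varepsilon=\min(1,c\varepsilon_0)$ depends only on $n,\lambda,\Lambda$.

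I expect the main obstacle to be the second step. Without $C^1$ regularity of $F$ one cannot linearize, so the passage from ``touching paraboloids with controlled, improving openings at all small scales'' to a genuine $C^{2,\alpha}$ expansion must be done purely through the measure-decay estimate and the sequence of approximating quadratics. One must also make the dependence on $1-\alpha$ linear in the exponent. To handle this, I would first try to show directly that the quadratics $Q_k$ achieving $\omega(x,2^{-k})$ form a Cauchy sequence with $|D^2Q_{k+1}-D^2Q_k|\lesssim \omega(x,2^{-k})$, and then sum the resulting geometric series.
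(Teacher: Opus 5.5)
There is a genuine gap, and it lies in the mechanism that is supposed to produce improvement. Your counting step rests entirely on the second-order function $\Theta$ (two-sided touching paraboloids) of $u$ and of its rescalings $r^{-2}(u-Q)(x_0+r\,\cdot)$. Applied there, the $W^{2,\varepsilon}$ estimate only says that, off a set of relative measure $Ct^{-\varepsilon_0}$, one has roughly $\omega(x,\rho)\le t\,\omega(x_0,r)$ for all smaller $\rho$. So the normalized quadratic error grows by at most a factor $t$. Since the tail bound is vacuous unless $t$ is large, no choice of $t$ (in particular not $t\approx 2^{1-\alpha}$) can force $\omega(\cdot,r/2)\le 2^{-\alpha}\omega(\cdot,r)$ on most children.

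Second-order control cannot by itself produce decay of $\omega$. The Nadirashvili--Tkachev--Vl\u{a}du\c{t} solution $w=P/\abs{x}$ is $C^{1,1}$, so it has bounded $\Theta$ everywhere. Its blow-up at $0$ is $w$ itself, a $C^{1,1}$ solution of a translation-invariant equation, yet $w$ is not twice differentiable at $0$. This also shows that the claim in your second step, that two-sided pointwise $C^{1,1}$ bounds plus blow-up compactness yield twice differentiability, is false.

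There are two further problems with the iteration. A per-generation bad fraction $C2^{-(1-\alpha)\varepsilon_0}$, even if it were available, would compound to $(C2^{-(1-\alpha)\varepsilon_0})^k$ after $k$ generations, which destroys the exponent unless $C\le 1$. And the ``positive proportion of scales'' bookkeeping does not match $\Sigma_{2,\alpha}$, where failure may occur only along a sparse sequence of scales.

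The missing idea is third-order control. You remark that incremental quotients of $u$ lie in the Pucci class, but you never use this. Applying the $W^{2,\varepsilon}$ estimate to the directional derivatives gives the Armstrong--Silvestre--Smart $W^{3,\varepsilon}$ tail
$\abs{\{x\in B_{1/2}:\Psi(u,B_1)(x)>t\}}\le Ct^{-\varepsilon}$.
Here $\Psi(u,U)(x)$ is the infimum of $A$ such that $\abs{u-Q}\le A\abs{\cdot-x}^3/6$ on $U$ for some quadratic $Q$.

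With this, no iteration across generations is needed. If $\omega(x,r)>a$, then $\Psi(u,B_1)\ge\Psi(u,B_r(x))>a/r$ on all of $B_{r/2}(x)$. Indeed, a cubic bound centered at $y\in B_{r/2}(x)$ gives $\omega(x,r)\le\tfrac{9}{16}\,r\,\Psi(u,B_r(x))(y)$.

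Take $r_k=2^{-k}r_0$ and $a_k=k^{-2}r_k^\alpha$. The threshold $a_k/r_k=k^{-2}r_k^{\alpha-1}$ tends to infinity precisely because $\alpha<1$. Telescoping the best quadratics shows $\Sigma_{2,\alpha}\cap B_{1/4}\subset\bigcap_K\bigcup_{k\ge K}\{\omega(\cdot,r_k)>a_k\}$. This is your closing remark, which is correct and uses no PDE; the factor $k^{-2}$ supplies the little-$o$.

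Now cover each set $\{\omega(\cdot,r_k)>a_k\}$ by $N_k$ balls of radius $r_k$ whose half-balls are pairwise disjoint. Applying the tail bound at level $a_k/r_k$ gives $N_k\le Ck^{2\varepsilon}r_k^{-n+\varepsilon(1-\alpha)}$. Summing $N_kr_k^s$ over $k\ge K$ yields $\mathcal{H}^s=0$ for $s>n-\varepsilon(1-\alpha)$, and rescaling removes the normalization. The linear factor $1-\alpha$ comes from the mismatch between the cubic scaling of $\Psi$ and the $r^{2+\alpha}$ target, not from a per-generation improvement.
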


\begin{remark}\label{remark-estimate}
In Theorem~\ref{thm:main-2}, the restriction $\alpha<1$ cannot be
removed if one seeks a nontrivial estimate for
$\dim_{\cH}\Sigma_{2,\alpha}(u)$, namely, an upper bound strictly
smaller than $n$.  Indeed, after extending
Definition~\ref{def:expansions} to $\alpha=1$, the set
$\Sigma_{2,1}(u)$ can be the whole ball $B_1$.  Such examples of $u$ include all nonzero homogeneous harmonic polynomials of degree three.
\end{remark}

Trudinger's theorem \cite{Trudinger-twice-differentiability} states that a continuous viscosity solution $u$ of \eqref{equ-u} must be almost everywhere twice differentiable; equivalently, the Lebesgue measure of $\Sigma_{2,0}(u)$ is zero. 
His result in fact applies to a very general class of equations.
It does not, however, provide a positive lower bound for the Hausdorff codimension of $\Sigma_{2,0}(u)$.  
Therefore,
Theorem~\ref{thm:main-2}, in the case $\alpha=0$, quantitatively strengthens Trudinger's theorem. 

Our proof of Theorem \ref{thm:main-2} needs the
$W^{3,\varepsilon}$ estimate of Armstrong--Silvestre--Smart.
Roughly speaking, this estimate comes from applying to $Du$ the
$W^{2,\varepsilon}$ estimate of Lin \cite{Lin}; see
Proposition \ref{prop:third-order-tail} and the discussion there.
One of our new points is that, under uniform ellipticity alone, this estimate
can be combined with quadratic approximation at decreasing dyadic scales to control the
set where pointwise $C^{2,\alpha}$ expansions fail.  A further
scale-by-scale covering argument then yields the Hausdorff codimension
$\varepsilon(1-\alpha)$ in Theorem~\ref{thm:main-2}.

It remains natural to ask whether the factor $1-\alpha$ can be removed under uniform ellipticity alone, or more generally to determine the optimal dependence of $\dim_\cH \Sigma_{2,\alpha}(u)$ on $\alpha$.
Under the additional assumption \eqref{F-C1},
Armstrong--Silvestre--Smart \cite[Theorem~1.1]{ASC} prove
a stronger conclusion: there exists a closed set $\Sigma$ of Hausdorff dimension at most $n-\varepsilon$ such that
$u\in C^{2,\beta}(B_1\setminus\Sigma)$
for every $0<\beta<1$. As a consequence, $\dim_{\mathcal H}\Sigma_{2,\alpha}(u)\leq n-\varepsilon$ for every $\alpha\in [0,1)$.
The distinction is that
Theorem~\ref{thm:main-2} assumes only uniform ellipticity and makes no
differentiability assumption on $F$.

\medskip

The paper is organized as follows. Section \ref{sec-cartan} establishes Proposition
\ref{augmented-family-proposition}, together with other properties of the Cartan cubic needed in the proof. Section \ref{sec-thm-1}
proves Theorem \ref{main-thm}. Section \ref{sec:proof-positive-result} proves Theorem \ref{thm:main-2}. 
Appendix \ref{appendix-orbit-reduction} provides a proof of
the orbit reduction lemma used in Section \ref{sec-cartan}.
Appendix \ref{app-expansion} discusses pointwise $C^{2,\alpha}$ expansion. Throughout the paper, both $B_r(x)$ and $B(x,r)$ denote the open ball centered at $x$ of radius $r$.

\section{Cartan Isoparametric Cubic Form and its augmented Hessians}\label{sec-cartan}
Recall the functions $w, P$ defined in \eqref{def-w} and \eqref{cartan-cubic}.  
We denote by $\Sph^4$ the four-dimensional unit sphere. 
For $e\in\Sph^4$ and $s\ge 0$, let
$$
 K(e,s):=D^2w(e)+sD^2P(e).
$$

One crucial new ingredient in our proof of Theorem \ref{main-thm} is the following 
\begin{proposition}
\label{augmented-family-proposition}
For $e,f\in\Sph^4$ and $s,t\geq 0$, 
$$
 \lambda_1\bigl(K(e,s)-K(f,t)\bigr)
 \geq  {44}^{-1}\norm{K(e,s)-K(f,t)}_F.
$$
\end{proposition}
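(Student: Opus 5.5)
We reduce the problem to an explicit finite-dimensional computation. First we use the symmetry of the Cartan cubic. The form $P$ is invariant under an action of $SO(3)$ on $\R^5$ (identify $\R^5$ with traceless symmetric $3\times3$ matrices, so that $P$ is a multiple of $\det$). Every $e\in\Sph^4$ is conjugate to a diagonal point, i.e.\ to a point of the plane $\{z=0\}$, $e=(\cos\phi,\sin\phi,0,0,0)$. Since $w=P/\abs{x}$ is invariant under the same action, and both $\lambda_1$ and $\norm{\cdot}_F$ are invariant under orthogonal conjugation, we may take $e$ in this normal form. The statement involves a pair $(e,f)$, however, and only one of them can be normalized this way. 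The other point $f=g\cdot f_0$ carries a rotation $g\in SO(3)$. This is where the orbit reduction lemma enters, as in \cite{NTV}: $D^2P(e)$ and $D^2w(e)$ are computed explicitly at $e=(\cos\phi,\sin\phi,0,0,0)$. There they are block diagonal, with a $2\times2$ block on the $(x_1,x_2)$ plane and three $1\times1$ blocks in the $z_i$ directions, whose eigenvalues are $\cos(\phi+2\pi k/3)$-type expressions. Writing out $K(e,s)$ on these blocks gives explicit eigenvalue formulas in $(\phi,s)$.

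Next we use an equivalent formulation of the inequality. We have $\lambda_1(X-Y)\ge\theta\norm{X-Y}_F$ for all $X,Y$ in a set $\mathcal S$ if and only if no nonzero difference lies in the cone $\mathcal C_\theta=\{N:\lambda_1(N)<\theta\norm N_F\}$. Applied with the roles of $X$ and $Y$ swapped, this also requires $\lambda_n(X-Y)\le-\theta\norm{X-Y}_F$, so every difference must be strongly indefinite. Two ingredients supply this.

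\emph{Trace control.} $\tr D^2P=\Delta P=0$, and $\Delta w(e)=c\,P(e)$ for an explicit constant $c$, by homogeneity: $\Delta(P r^{-1})=P\Delta r^{-1}+2\nabla P\cdot\nabla r^{-1}=(-1\cdot 3\cdot 2+\dots)P/r^3$. So traces are of size $\abs{P(e)-P(f)}$, which is small relative to the size of the difference.

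\emph{Norm versus top eigenvalue.} We then bound $\norm{K(e,s)-K(f,t)}_F$ above and $\lambda_1$ below. We split into cases.

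When $\abs{s-t}$ is large compared to $1$, the term $(s-t)D^2P(e)$ dominates, up to $t\,(D^2P(e)-D^2P(f))$. Here we use that $D^2P(e)-D^2P(f)=D^2P(e-f)$, since $D^2P$ is linear in $x$ for a cubic. Then $sD^2P(e)-tD^2P(f)=D^2P(se-tf)$, and for the harmonic cubic $P$ the Hessian $D^2P(v)$ is traceless, with eigenvalues determined by $\abs v$ via the isoparametric property: the spectrum of $D^2P(v)$ is $\abs v$ times a fixed pattern up to the orbit parameter. This gives $\lambda_1(D^2P(v))\ge c_0\norm{D^2P(v)}_F$, using that $D^2P(v)$ is traceless with at most $\dots$ eigenvalues. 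The remaining bounded perturbation $D^2w(e)-D^2w(f)$ is handled by showing that it is itself controlled by $\abs{e-f}\lesssim\abs{se-tf}$ when $s,t$ are large.

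When $s,t$ are bounded, compactness reduces matters to a local analysis near the diagonal $e=f$, $s=t$. There the difference is, to first order, a directional derivative $\partial K$ in the $(e,s)$ variables, and we must show that every nonzero tangent vector produces an indefinite matrix with $\lambda_1\ge\theta'\norm{\cdot}_F$. This is the Nadirashvili--Vl\u{a}du\c{t} "hyperbolicity'' check, now augmented with the $s$ direction. Away from the diagonal, a continuity and compactness argument on the compact parameter space suffices, provided we show that $\lambda_1(K(e,s)-K(f,t))>0$ whenever the difference is nonzero.

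\textbf{Main obstacle.} The hard step is obtaining this positivity globally with the explicit constant $44^{-1}$ rather than a qualitative $\theta>0$. We would first try to orbit-reduce the pair jointly: use $SO(3)$ to put $e$ in diagonal form, and the stabilizer of $e$ (a finite group, or $SO(2)$ at special points) together with the homogeneity in $s$ to reduce $f$ to few parameters. We would then bound $\lambda_1$ from below by testing against explicit unit vectors (the $z_i$ directions and $e$ itself), obtaining $\lambda_1\ge\max_i\langle(K(e,s)-K(f,t))v_i,v_i\rangle$. Combined with $\norm{\cdot}_F^2\le 5\max$-type bounds derived from the traceless structure, this yields the numeric constant.
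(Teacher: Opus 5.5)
There is a genuine gap. The step you call the main obstacle is showing $\lambda_1\bigl(K(e,s)-K(f,t)\bigr)>0$ globally, with the explicit constant $44^{-1}$. That is the entire content of the proposition, and neither route you outline reaches it.

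The compactness-plus-linearization argument can at best give some non-explicit $\theta>0$. Even that requires two things you do not carry out: an infinitesimal ``hyperbolicity'' check for the differential of $(e,s)\mapsto K(e,s)$, and injectivity of this map.

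The joint orbit reduction does not help either. Once $e$ is normalized, its stabilizer is generically finite, and there is no homogeneity in $s$, since the term $D^2w(e)$ does not scale. So $f\in\Sph^4$ (modulo a finite group), $s$ and $t$ all remain free, and you must control $\lambda_1$ uniformly over the relative rotation. Testing against a fixed finite set of unit vectors cannot do that. The ``$5\max$-type'' bound you want from tracelessness also does not apply, because $K(e,s)-K(f,t)$ is not traceless.

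Your trace heuristic actually points the wrong way. For $s=t=0$ and $f=-e=-e_1$, the difference is $2D^2w(e_1)$, with eigenvalues $4,4,4,-14,-14$, so its trace is comparable to its norm. Your perturbation argument around $D^2P(se-tf)$ for large $\max\{s,t\}$ is fine; it is essentially the paper's Case 2 in Claim 2. But that is not where the difficulty lies.

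The missing idea is that the relative position of $e$ and $f$ never has to be analyzed.

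1. Set $A=K(e,s)$, $B=K(f,t)$ and $\gamma=\max\{\lambda_1(A-B),0\}$, so that $A\le B+\gamma I_5$. It suffices to prove $\norm{A-B}_F\le 44\gamma$.
2. Since $B-A+\gamma I_5\ge 0$, its Frobenius norm is at most its trace. Hence $\norm{A-B}_F\le(\tr B-\tr A)+(5+\sqrt5)\gamma$, and everything reduces to bounding $\Delta T=\tr B-\tr A$ by a multiple of $\gamma$.
3. By min--max, $A\le B+\gamma I_5$ gives $\lambda_i(A)\le\lambda_i(B)+\gamma$ for every $i$. These constraints involve only the two spectra, hence, by Lemma~\ref{orbit-reduction}, only the orbit parameters $(q,s)$ and $(p,t)$. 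The relative rotation drops out completely.
4. The paper computes the spectrum of $K(e_p,s)$ explicitly and shows that $m=p(p^2+6s+3)/2$ is always the middle eigenvalue.
5. It then writes $\lambda_1+\lambda_2$ and $\lambda_4+\lambda_5$ in terms of the trace $T$, $m$ and the total spread $D$.
6. Combining the monotonicity of $T(p)=4p(p^2-3)$ with $D_s\ge 20$ and $D_p<7$ yields $\Delta T\le 36\gamma$. Since $36+5+\sqrt5<44$, this gives the constant $44$.
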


The proof of Proposition \ref{augmented-family-proposition} uses the
following orbit reduction property of $P$. 
Let
$$
 e_p:=(p,0,\sqrt{1-p^2},0,0)\in\Sph^4\quad \text{for }p\in [-1,1].
$$

\begin{lemma}[\cite{NTV}]\label{orbit-reduction}
For every $e\in\Sph^4$, there is a unique $p\in[-1,1]$, and there exists an orthogonal
matrix $O\in O(5)$ such that
$e=Oe_p$ and $P\circ O =P$.
Moreover, $p$ is the unique solution in $[-1,1]$ of
$P(e)= p(3-p^2)/2$.
\end{lemma}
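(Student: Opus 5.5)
The plan is to realize the symmetry group of $P$ through the classical identification of $\R^5$ with traceless symmetric $3\times 3$ matrices. Under this identification the Cartan cubic becomes a multiple of the determinant and $\abs{x}^2$ becomes a multiple of $\tr X^2$. The existence of $O$ will then come from diagonalization, and uniqueness of $p$ from a monotonicity computation along the arc $\{e_p\}$.

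\textbf{Step 1 (identification).} I would write down an explicit linear isomorphism $\Phi$ from $\R^5$ to the traceless part of $\Sym(3)$ satisfying
\[
\tr \Phi(x)^2=c_1\abs{x}^2,\qquad \det\Phi(x)=c_2P(x)
\]
for positive constants $c_1,c_2$. I would require in addition that the plane $\Pi:=\{x_2=z_2=z_3=0\}$ is mapped onto the diagonal traceless matrices. A natural ansatz is: put $x_1$ and $z_1$ on the diagonal via two orthonormal traceless diagonal matrices, and put $x_2$, $z_2$, $z_3$ on the off-diagonal entries, with signs and normalizations fixed by matching coefficients in $\det$. Matching the coefficients of \eqref{cartan-cubic} is a finite computation.

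Once $\Phi$ is found, for every $R\in O(3)$ the map $x\mapsto \Phi^{-1}(R\Phi(x)R^T)$ is a linear isometry of $\R^5$, since it preserves $\tr X^2$. It also preserves $P$, since it preserves $\det$. Finding $\Phi$ concretely, and verifying that it sends $\Pi$ to the diagonal matrices, is the main obstacle. If the diagonal plane turns out to be a different $2$-plane, I would compose with a fixed symmetry of $P$ to correct this.

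\textbf{Step 2 (restriction to $\Pi$).} On $\Pi$ we have
\[
P(x_1,0,z_1,0,0)=x_1^3+\tfrac32x_1z_1^2.
\]
On the unit circle in $\Pi$, write $(x_1,z_1)=(\cos\phi,\sin\phi)$. Then $P=\cos 3\phi$, and for $p=\cos\phi$ this equals
\[
g(p):=\frac{p(3-p^2)}{2}.
\]
Since $g'(p)=\tfrac32(1-p^2)\ge 0$, with equality only at $p=\pm1$, the function $g$ is a strictly increasing bijection of $[-1,1]$ onto itself. Consequently $\abs{P}\le 1$ on the circle.

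\textbf{Step 3 (existence of $O$).} Given $e\in\Sph^4$, choose $R\in SO(3)$ diagonalizing $\Phi(e)$. This produces an orthogonal $O_1$ preserving $P$ with $O_1e=f\in\Pi\cap\Sph^4$. In particular $P(e)=P(f)\in[-1,1]$, so by Step 2 there is a unique $p\in[-1,1]$ with $g(p)=P(e)$, and then $P(e_p)=P(f)$.

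It remains to move $f$ to $e_p$ inside $\Pi$. Conjugation by permutation matrices permutes the eigenvalues, and so acts on the circle in $\Pi$ as the dihedral group of order $6$: rotations by $2\pi/3$ and reflections. This group preserves $\cos3\phi$. The level set $\{\cos 3\phi=c\}$ is $\{\pm\phi_0+2\pi k/3\}$, which is exactly one orbit of this dihedral group. Hence some such $O_2$ maps $f$ to $e_p$, and $O:=(O_2O_1)^{-1}$ satisfies $e=Oe_p$ and $P\circ O=P$.

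\textbf{Step 4 (uniqueness of $p$).} If $e=Oe_q$ for some orthogonal $O$ with $P\circ O=P$, then
\[
P(e)=P(e_q)=g(q).
\]
Injectivity of $g$ forces $q=p$. This also shows that $p$ is the unique solution of $P(e)=p(3-p^2)/2$ in $[-1,1]$, as claimed.
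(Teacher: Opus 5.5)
\textbf{There is a genuine gap in Steps 1--3: the plane $\Pi$ is not the diagonal plane.}

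Step~1 requires that $\Pi=\{x_2=z_2=z_3=0\}$, the plane containing the arc $e_p$, be mapped onto the diagonal traceless matrices. Step~2 asserts $P=\cos3\phi$ on the unit circle of $\Pi$. Both claims are false, and Step~3 depends on them. Your own (correct) restriction $P=x_1^3+\tfrac32x_1z_1^2$ gives, for $(x_1,z_1)=(\cos\phi,\sin\phi)$,
\[
P=g(\cos\phi)=\tfrac98\cos\phi-\tfrac18\cos3\phi .
\]
This is not $\cos3\phi$; for instance, at $\phi=\pi/3$ it equals $11/16$, not $-1$.

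On the other hand, take any $\Phi$ with $\tr\Phi(x)^2=c_1\abs{x}^2$ and $\det\Phi=c_2P$. On the unit circle of $\Phi^{-1}(\text{diagonal matrices})$, $P$ restricts to $c\cos3(\phi-\phi_0)$, a pure third Fourier mode, because $\det\operatorname{diag}(a,b,c)$ restricted to the unit circle $a+b+c=0$ equals $(3\sqrt6)^{-1}\cos3\psi$. So $\Pi$ is never the diagonal plane. For the paper's explicit isometry $J$, which satisfies $P=-3\sqrt6\det J$ and places $x_1,x_2$ on the diagonal and $z_1,z_2,z_3$ off it, the diagonal plane is the $(x_1,x_2)$-plane, where $P=x_1^3-3x_1x_2^2$.

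Your fallback fails for the same reason. An orthogonal $S$ with $P\circ S=P$ carrying the diagonal plane onto $\Pi$ would identify the two restricted functions up to a rotation or reflection of the circle, and such maps preserve Fourier modes. Concretely, in Step~3 the point $f=O_1e$ lies in the diagonal plane. Permutation conjugations preserve that plane, but $e_p$ is not in it for $\abs{p}<1$; for example, $J(e_p)$ has off-diagonal entry $\rho/\sqrt2$. So no element of your dihedral group moves $f$ to $e_p$.

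\textbf{The fix: compare spectra directly.} Instead of trying to place $e_p$ in the diagonal plane, compare $\Phi(e)$ with $\Phi(e_p)$.
\begin{itemize}
\item Both matrices are traceless.
\item $\tr\Phi(e)^2=c_1=\tr\Phi(e_p)^2$.
\item $\det\Phi(e)=c_2P(e)=c_2g(p)=\det\Phi(e_p)$.
\end{itemize}
A traceless $3\times3$ matrix $Y$ has characteristic polynomial $\lambda^3-\tfrac12\tr(Y^2)\lambda-\det Y$. Hence the two matrices have the same spectrum, so $\Phi(e)=Q\Phi(e_p)Q^T$ for some $Q\in O(3)$. The induced isometry $x\mapsto\Phi^{-1}(Q\Phi(x)Q^T)$ is the desired $O$; this is exactly the paper's proof. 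Equivalently, you could keep your dihedral argument but run it inside the true diagonal plane, applied to $f$ and to the diagonalized image of $e_p$.

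Note also that the bound $\abs{P(e)}\le1$, needed so that $P(e)$ lies in the range of $g$, must come from the diagonal plane, where $P=\cos3\phi$. The paper instead uses the discriminant bound $\abs{abc}\le1/(3\sqrt6)$. It cannot come from $g$ on $\Pi$. Your Step~4 (uniqueness via injectivity of $g$) is correct.
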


For completeness, a proof of Lemma~\ref{orbit-reduction} is included
in Appendix~\ref{appendix-orbit-reduction}.

\begin{proof}[Proof of Proposition~\ref{augmented-family-proposition}]
We first analyze the eigenvalues for general $K(e_p,s)$. 

For $p\in [-1, 1]$, 
let $\rho=\sqrt{1-p^2}$. A direct calculation gives $K(e_p,s)=\operatorname{diag}\{B_3, B_2\}$, where
$$
2B_3=
\begin{pmatrix}
 p(3+4p^2-3p^4+12s)&3\sqrt3p(p^2-1)&3\rho(1-p^4+2s)\\
 3\sqrt3p(p^2-1)&p(p^2-15-12s)&3\sqrt3\rho(1+p^2+2s)\\
 3\rho(1-p^4+2s)&3\sqrt3\rho(1+p^2+2s)&p^3+3p^5+6ps
\end{pmatrix}
$$
and
$$
2B_2=
\begin{pmatrix}
 p(p^2+3+6s)&6\sqrt3\rho(s+1)\\
 6\sqrt3\rho(s+1)&p(p^2-15-12s)
\end{pmatrix}.
$$

A computation shows that the eigenvalues of $B_2$ are 
$b_\pm$, given by
$$
 2b_\pm(p,s) =
 {p(p^2-3s-6)\pm3\sqrt3(s+1)\sqrt{4-p^2}} 
$$
and the eigenvalues of $B_3$ are $m$ and $a_\pm$, given by
 $$
 2 m(p,s):= {p(p^2+6s+3)},
 \quad \text{and} \quad 4  a_\pm(p,s)={5p^3-15p\pm3\sqrt{R(p,s)}},
$$
where 
$$
 R(p,s):=(8s+4)^2+p^2(3-p^2)^2(8s+5)>0.
$$
Here the computation for $B_3$ is more manageable by noting that $(-\sqrt3\rho,\rho,\sqrt3p)$ is an eigenvector associated with $m$.  
It follows from the above formulae that
$$
 2(b_+-m)
 = 3(s+1)
 \bigl(\sqrt{3(4-p^2)}-3p\bigr)\geq 0,
$$
$$
 2(m-b_-)
 = 3 (s+1)
 \bigl(\sqrt{3(4-p^2)}+3p\bigr)\geq 0.
$$
Moreover,
$$
 4(a_+-m)
 = 3 \bigl(\sqrt R-p(4s+7-p^2)\bigr)\geq 0,
$$
$$
 4(m-a_-)
 = 3 \bigl(\sqrt R+p(4s+7-p^2)\bigr)\geq 0.
$$
The only non-obvious nonnegativity assertion above follows from
$$
 R-p^2(4s+7-p^2)^2
 =
 4(4-p^2)
 \bigl(4s^2+4s+1-p^4
 +2p^2(1-p^2)s\bigr)\geq0.
$$

The above implies that, for every $p\in [-1,1]$ and $s\ge 0$, the eigenvalues of $K(e_p,s)$ satisfy
\begin{equation}\label{K-matrix-eigen-order}
    \max\{a_-, b_-\} \le    m\le \min\{a_+, b_+\}. 
\end{equation}
In particular,
$m=\lambda_3(K(e_p, s))$ is the middle eigenvalue.

For $p\in [-1,1]$ and $s\ge 0$, define
$$
 D(p,s):=(a_+-a_-)+(b_+-b_-)
 = {3} \sqrt{R/4}+3\sqrt3(s+1)\sqrt{4-p^2}
$$
and
$$
 T(p):=\tr K(e_p,s)=4p(p^2-3).
$$
Note that the trace of $K(e_p,s)$ is independent of $s$ since $D^2 P$ is traceless. Then \eqref{K-matrix-eigen-order} implies
\begin{equation}\label{extreme-pair-sums}
 (\lambda_1+\lambda_2)(K(e_p,s))=(T-m+D)/2 \quad \text{and}\quad (\lambda_4+\lambda_5)(K(e_p,s))=
 (T-m-D)/2.
\end{equation}
Moreover, $T$ is strictly decreasing on $[-1,1]$ and
$\abs{T'}\leq 12$ there.

Next we prove the following derivative bounds:
\begin{equation}\label{D-derivative-bounds}
 D_s(p,s)\geq20
 \quad\text{and}\quad
 D_p(p,s)<7,
 \qquad
 0\leq p\leq 1,\quad s\geq 0.
\end{equation}
In the proof of these bounds, we rewrite $R$ as
$$
 R=\sigma^2+h(\sigma+1),
$$
where
$\sigma:=8s+4$ and $h:=p^2(3-p^2)^2$.

Differentiating $D$ with respect to $s$ gives
$$
 D_s
 =
 6 R^{-1/2}(2\sigma+h)
 +3\sqrt3\sqrt{4-p^2}.
$$
Since 
$4R-(2\sigma+h)^2=h(4-h)\le 4$
and
$R\geq\sigma^2\geq16$, we obtain
$$
 R^{-1/2 }{(2\sigma+h)}
 \geq  {\sqrt{15}}/2.
$$
Combining the above, we obtain 
$$
 D_s
 \geq 3\sqrt{15}+9>20.
$$

Next we derive the bound for $D_p$. By differentiation, 
$$
 D_p
 =
 \frac{3}{4}\frac{R_p}{\sqrt R}
 -3\sqrt3(s+1)\frac{p}{\sqrt{4-p^2}}\le  \frac{3}{4}\frac{R_p}{\sqrt R},
$$
where we have used $p\ge 0$. A computation yields
$$
 R_p
 =
 6p(1-p^2)(3-p^2)(8s+5).
$$
Using
$\sqrt R\geq 8s+4$,
we obtain
 $$
 D_p
 \leq
 \frac{3^3}{2~}
 p(1-p^2)
 \frac{8s+5}{8s+4}
 \leq
 \frac{3^3}{2~}
 \cdot
 \frac{2}{3\sqrt3}\cdot \frac{5}{4}
 =
 \frac{15\sqrt3}{4}
 <7.
$$
Hence, we have proved both bounds in \eqref{D-derivative-bounds}.

Now we are ready to prove the desired estimate in Proposition \ref{augmented-family-proposition}.
Fix $e,f\in\Sph^4$ and $s,t\geq 0$. For the rest of the proof, set
$$
 A=K(e,s) \quad \text{and}
 \quad
 B=K(f,t). 
$$
\noindent\textbf{Claim.}
If
\begin{equation}\label{approximate-order}
 A\le  B +\gamma I_5,
\end{equation}
for some $\gamma\ge 0$, then
\begin{equation}\label{approximate-order-estimate}
 \norm{A-B}_F \leq  44 \gamma.
\end{equation}

Assuming the Claim at this moment, we prove the desired estimate. If $\lambda_1(A-B)\leq0$, then
$A\le B$. Applying the Claim with $\gamma=0$ gives $A-B=0$. The desired estimate clearly holds in this case. 
If $\gamma:=\lambda_1(A-B)>0$, then, by
$A-B\le \gamma I_5$, applying the Claim again gives
$\norm{A-B}_F\leq 44 \lambda_1(A-B)$. The desired estimate follows.

Now we prove the Claim. By Lemma \ref{orbit-reduction}, there exist $p,q\in [-1,1]$ and $O_f,O_e\in O(5)$ such that
$$
P\circ O_f=P,\quad P\circ O_e=P,\quad 
 f=O_f e_p,
 \quad \text{and}\quad 
 e=O_e e_q.
$$
Therefore, $w=P/\abs{x}$ is also invariant under
$O_f$ and $O_e$. From these facts, we obtain
$$
 B=O_f K(e_p,t) O_f^T\quad  \text{and}
 \quad
 A=O_e K(e_q,s) O_e^T.
$$
In particular, $\lambda_i(B)=\lambda_i(K(e_p, t))$ and $\lambda_i(A)=\lambda_i(K(e_q, s))$ for every $i$. 

Let
$$
 \Delta T:=T(p)-T(q),
 \quad
 \Delta m:=m(p,t)-m(q,s),
 \quad\text{and}\quad 
 \Delta D:=D(p,t)-D(q,s).
$$
Then, by \eqref{approximate-order}, an application of the min--max formulae for eigenvalues gives
$$
 \lambda_i(B)+\gamma\geq\lambda_i(A)
 \quad \text{for every }i.
$$
From \eqref{K-matrix-eigen-order}, \eqref{extreme-pair-sums}, and the above, we obtain
$$
 \Delta m\geq-\gamma,
\quad 
 {\Delta T-\Delta m+\Delta D}\geq - 4\gamma,
 \quad\text{and}\quad 
 {\Delta T-\Delta m-\Delta D} \geq - 4\gamma.
$$
It follows that
\begin{equation}\label{Delta-D-upper}
 \abs{\Delta D}\leq\Delta T+5\gamma.
\end{equation}

Now we prove
\begin{equation}\label{Delta-T-upper-bound}
     \Delta T\leq 36 \gamma.
\end{equation}
When $p\geq q$, the monotonicity of $T$ gives $\Delta T\le 0$; then \eqref{Delta-T-upper-bound} is clear. Hence, we may assume 
$$
 \delta:=q-p>0.
$$
To prove \eqref{Delta-T-upper-bound}, we only need to prove
$$\delta\leq3\gamma,$$  
since then the desired bound of $\Delta T$ follows from
$\abs{T'}\leq 12$.

If $p<0\leq q$, then by definition of $m$,
$$
 m(p,t)\leq 3p/2\quad \text{and}\quad 
 m(q,s)\geq 3q/2.
 $$
Thus $\Delta m\leq-3\delta/2$, and therefore
$\delta\leq2\gamma/3 \le 3\gamma$.

Suppose that $p<q\leq 0$.  Since both $P$ and $w$ are odd,
$K(-g,r)=-K(g,r)$ for every $g\in\Sph^4$ and $r\geq 0$.  Replacing the
ordered pair $(B,A)$ by
$$
 (B',A'):=(-A,-B)=(K(-e,s),K(-f,t))
$$
preserves both the inequality $B+\gamma I_5\geq A$ and the norm $\norm{A-B}_F$. By the ``Moreover'' part of Lemma \ref{orbit-reduction} and the oddness of $P$, the new orbit parameters are $p'=-q$ and $q'=-p$. Since $q'> p'\ge 0$, after this
relabeling argument, it remains only to consider the last case
$$
 q> p\ge 0.
$$

A direct computation gives
\begin{equation}\label{Delta-T-formula}
 \Delta T=4\delta(3-S),
\end{equation}
where $S:=p^2+pq+q^2$. 
Moreover, $\Delta m\geq-\gamma$ is equivalent to
\begin{equation}\label{middle-approximate-order}
 6p(t-s)\geq\delta(S+3+6s)-2\gamma.
\end{equation}
If $t\leq s$, this immediately gives
$\delta\leq2\gamma/3\le 3\gamma$.  So we may assume $t>s$.  By
\eqref{D-derivative-bounds},
$$
 \Delta D
 =D(p,t)-D(p,s)+D(p,s)-D(q,s)
 \geq20(t-s)-7\delta.
$$
Together with \eqref{Delta-D-upper}, this yields
$$
 20(t-s)\leq7\delta+\Delta T+5\gamma.
$$
Combining this inequality with \eqref{middle-approximate-order}, and
then using \eqref{Delta-T-formula}, gives
$$
 \left(\left(1+\frac{6p}{5}\right)S
 +3+6s-\frac{57p}{10}\right)\delta
 \leq\left(2+ 3p/2 \right)\gamma.
$$
Since $S\geq 3p^2$, the coefficient on the left hand side is bounded from below by
$$
 3p^2\left(1+\frac{6p}{5}\right)
 +3-\frac{57p}{10}
 =\frac{27}{20}
 +\frac{3}{20}(2p-1)^2(6p+11)
 \geq\frac{27}{20}.
 $$
Since $2+3p/2\leq7/2$, we conclude that
$\delta\leq {70} \gamma/27\leq 3\gamma$.
Hence, \eqref{Delta-T-upper-bound} is proved.

Finally, by the triangle inequality and \eqref{Delta-T-upper-bound},
 \begin{align*}
 \norm{A-B}_F
 &\leq\norm{B-A+\gamma I_5}_F+\sqrt5\gamma\\
 &\leq\tr(B-A+\gamma I_5)+\sqrt5\gamma=\Delta T+(5+\sqrt5)\gamma \leq 44\gamma.
 \end{align*}
The second inequality above uses 
$B-A+\gamma I_5\geq 0$. 
This proves \eqref{approximate-order-estimate}, and hence the Claim. This completes the proof of Proposition \ref{augmented-family-proposition}. 
\end{proof}

We end this section by proving 
\begin{equation}\label{background-Hessian-estimate}
 \lambda_1(D^2P(x))
 \geq c_0\abs{x},\quad x\in \R^5,
\end{equation}
where $c_0:=4^{-1} \sqrt{126/5}$. This estimate is needed in our proof of Theorem \ref{main-thm}.

To see this, it is well known that $P$ satisfies the Cartan--M\"unzner system
$$
 \abs{\nabla P(x)}^2=9\abs{x}^4,\qquad \Delta P=0.
$$
In particular, $\tr D^2P\equiv 0$. It follows that $4\lambda_1(D^2 P)\ge |\lambda_i(D^2 P)|$ for every $i$. From this and the fact $$\norm{D^2P(x)}_F^2=126\abs{x}^2,$$ 
we obtain \eqref{background-Hessian-estimate}.

\section{Proof of Theorem~\ref{main-thm}}\label{sec-thm-1}

Let $K\subset B_1$ be a nonempty compact nowhere dense set in $\R^5$. 

Our proof is divided into four steps.
Step 1 constructs the countable set $A$, and proves that its set of accumulation points is exactly $K$, and that every point of
$A$ is isolated in $E\coloneqq K\cup A$. In Step 2, we construct the function $u\in C^{1,1}(B_1)$, and prove assertion (ii) and thus $\Sing(u)=E$.  
 Step 3 constructs a uniformly elliptic
operator $F$ whose ellipticity constants are independent of $K$, and proves $F(D^2 u)=0$ on $B_1\setminus E$ classically. 
Finally, 
Step 4
proves $F(D^2 u)=0$ in the whole ball in the viscosity sense. 

\noindent\textbf{Step 1. Construction of the countable set $A$.}
  For every
$m\geq1$, let
$$
 \delta_m:=2^{-m-4}\operatorname{dist}(K,\partial B_1) >0.
$$
Since $K$ is compact, there are finitely many points
$\xi_{m,1},\ldots,\xi_{m,N_m}\in K$ such that
$K\subset\cup_{k=1}^{N_m}B_{\delta_m}(\xi_{m,k})$.
Since $K$ is closed and has empty interior, each set
$B_{\delta_m}(\xi_{m,k})\setminus K$ is open and contains uncountably many points.  We may therefore choose, for every $m$ and $k$,
$a_{m,k}\in B_{\delta_m}(\xi_{m,k})\setminus K \subset B_1$
 such that all the points $a_{m,k}$ are distinct.

Define
$$
 A:=\cup_{m\geq1}A_m \quad \text{with}\quad
 A_m:=\{a_{m,1},\ldots,a_{m,N_m}\}.
$$
Then the sets $A_m$ are finite and pairwise disjoint. In particular, $A\subset B_1\setminus K$ is countable. Moreover, we observe that for every $m$,
\begin{equation}\label{center-estimates}
 0<\operatorname{dist}(a,K)<\delta_m\text{ for }a\in A_m,
 \quad\text{and }\quad 
 \operatorname{dist}(x,A_m)<2\delta_m \text{ for }x\in K.
\end{equation}
Indeed, the first estimate follows from the choice of $a_{m,k}$.  Fix $x\in K$ and choose $k$ such that $x\in B_{\delta_m}(\xi_{m,k})$. Then
$\abs{x-a_{m,k}}
 \leq\abs{x-\xi_{m,k}}+\abs{\xi_{m,k}-a_{m,k}}
 <2\delta_m$, and the second estimate follows.

We now determine all accumulation points of $A$.  
Let $a_j\to z\in\R^5$ be a sequence of pairwise distinct points of $A$. Since each $A_m$ is finite, after passing to a subsequence, we may assume $a_j\in A_{m_j}$ with $m_j\to\infty$.  The first estimate in
\eqref{center-estimates} gives
$\operatorname{dist}(a_j,K)\to0$, and hence $z\in K$ by the closedness of $K$.  Conversely, let $x\in K$.  For every $m$, the second estimate in
\eqref{center-estimates} gives $a_m\in A_m$ such that
$\abs{a_m-x}<2\delta_m$, and thus $a_m \to x$ as $m\to\infty$. Hence, $x$ is an accumulation point of $A$. We have therefore proved that the set of accumulation points of $A$ is exactly
$K$.

We finish Step 1 by proving that every point of $A$ is isolated in the closed set
$$ E\coloneqq K\cup A. $$
Namely, fix $a\in A$. We prove that $\operatorname{dist}(a, E\setminus\{a\})>0$.
Let $d:=\operatorname{dist}(a,K)>0$.  Choose large $M$ such that
$\delta_m<d/3$ for every $m\geq M$. From this and \eqref{center-estimates}, we obtain 
$$
 \abs{a-b}\geq\operatorname{dist}(a,K)-\operatorname{dist}(b,K)
 >{2d}/{3}\quad \text{for all }b\in A_m,~m\ge M.
$$
Since the set $\cup_{m<M}A_m\setminus \{a\}$ is finite (possibly empty), the distance from $a$ to this set must be strictly positive. Combining the above facts, we obtain $\operatorname{dist}(a, E\setminus\{a\})>0$.  

\medskip
\noindent\textbf{Step 2. Construction of the solution $u$ and analysis of its Hessians.}
Let
$$
 R_a:={10}^{-1}\min\bigl\{
 \operatorname{dist}(a,E\setminus\{a\}),
 \operatorname{dist}(a,\partial B_1)\bigr\}>0 \quad \text{for }a\in A.
$$
Then, by definition and \eqref{center-estimates}, 
\begin{equation}\label{radius-bounds}
     R_a\leq {10}^{-1}\operatorname{dist}(a,K)< {10}^{-1}\delta_m  \quad \text{if }a\in A_m.
\end{equation}
Moreover,
if $a,b\in A$ and $a\neq b$, then
$R_a,R_b\leq\abs{a-b}/10$.  As a consequence,
\begin{equation}\label{radius-bounds-2}
 R_a+R_b\leq 5^{-1}\abs{a-b},\quad a,b\in A,~a\neq b. 
 \qquad
\end{equation}
It follows from the above that the closed balls $\overline{B(a,R_a)}\subset B_1\setminus K$ are pairwise disjoint.

Choose $\chi\in C^\infty[0,\infty)$ such that $\chi=1$ on $[0,1/4]$
and $\chi=0$ on $[1,\infty)$, and define
$$
 V(x):=\chi(\abs{x})w(x).
$$
Since $w\in C^{1,1}(\R^5)$, we have
$V\in C^{1,1}(\R^5)$.  The function $V$ is supported on
$\overline{B_1}$, equals $w$ in $B_{1/4}$, and is smooth away from the
origin. 
Therefore, there exists a constant
$C>0$, depending only on $P$ and $\chi$, such that
$$
|V|+|D V|+
\norm{D^2V}_F\leq C\quad \text{in } \overline{B_1}\setminus\{0\}.$$
Moreover, there exists a constant
$L>0$, depending only on $P$ and $\chi$, such that
$$
 \norm{D^2V(x)-D^2V(y)}_F \leq L \abs{x-y}
$$
whenever $x,y\neq0$ and at least one of $x,y$ lies outside $B_{1/4}$.
To see this, first suppose that $\abs{x-y}\geq 1/16$, then the estimate
follows from the boundedness of $D^2V$. If
$\abs{x-y}<1/16$, since one of $x$ and $y$ has norm at least $1/4$, then
every point on the segment joining $x$ and $y$ has norm at least $3/16$. The desired estimate follows from a bound of the third derivatives of $V$ on $\{|z|\ge 3/16\}$.

Choose, once and for all, a constant $\eta$ satisfying
$$
 0<4 \eta\leq {c_0}/{L},
$$
where $c_0$ is the constant in
\eqref{background-Hessian-estimate}.

Define
$$
u\coloneqq P+W  \quad \text{and}\quad 
\textstyle 
W:=\sum_{a\in A}V_a\quad \text{in }B_1,
$$
where
$$
 V_a(x):=\varepsilon_a R_a^2
 V\bigl( {(x-a)/R_a} \bigr)
 \quad\text{and}\quad 
 \varepsilon_a:=\eta R_a>0 \quad \text{for }a\in A. 
$$
The function $V_a$ is supported on $\overline{B(a,R_a)}$.  Since these
closed balls are pairwise disjoint, the sum in $W$ is well-defined and
at every point at most one summand is nonzero.  

For $a\in A$, scaling the estimates of $V$ gives
\begin{equation}\label{scaled-bump-estimates}
 \norm{V_a}_{L^\infty}\leq C\eta R_a^3,
 \quad
 \norm{DV_a}_{L^\infty}\leq C\eta R_a^2,
 \quad  
 \operatorname{Lip}(DV_a)\leq C\varepsilon_a, \quad \text{and}
\end{equation}
\begin{equation}\label{core-and-cross-estimates-2}
 \begin{gathered}
 \norm{D^2V_a(x)-D^2V_a(y)}_F
 \leq L\eta\abs{x-y}\\
 \text{if $x,y\neq a$ and at least one of them lies outside
 $B(a,R_a/4)$.}
 \end{gathered}
\end{equation}

Moreover, for $a\in A$, the homogeneity of $w$ implies
\begin{equation}\label{core-and-cross-estimates-1}
 V_a(x)=\varepsilon_aw(x-a)
 \quad\text{if }\abs{x-a}<R_a/4.
\end{equation}

\medskip
\noindent\textbf{Claim 1.}
The function $u$ belongs to $C^{1,1}(B_1)$ and is smooth in
$B_1\setminus E$.  At every $a\in A$, it is not twice differentiable.
At every $x\in K$, the function $u$ is twice differentiable with
$D^2u(x)=D^2P(x)$, and
\begin{equation}\label{cubic-expansion}
 \left|
 u(x+h)-u(x)-DP(x)\cdot h-2^{-1} h^TD^2P(x)h
 \right|
 \leq C\abs{h}^3,
\end{equation}
holds for all
$\abs{h}<\operatorname{dist}(x,\partial B_1)$, where $C$ is
independent of $x\in K$.
Consequently, $\Sing(u)=E$.

\begin{proof}[Proof of Claim 1]
We first observe that every point of
$B_1\setminus K$ has a neighborhood intersecting at most one of the closed support balls $\overline{B(a,R_a)}$.

We prove this observation by contradiction. If not, then
for some $x\in B_1\setminus K$, every neighborhood of $x$ intersects at least two distinct closed support balls.
Now we inductively choose pairwise distinct points $a_j\in  A$ and points
$y_j\in\overline{B(a_j,R_{a_j})}$
such that
$$
 \abs{y_j-x}< 1/j.
$$
Suppose that $a_1,\dots,a_{j-1}$ have already been chosen.  Since the
closed support balls are pairwise disjoint, $x$ belongs to 
$\overline{B(a_i,R_{a_i})}$ for at most one
of $1\le i\le j-1$.
Then we may choose small $\rho_j \in (0,1/j)$ such that
$
 B_{\rho_j}(x)\cap\overline{B(a_i,R_{a_i})}
 =\varnothing
$
for every $1\leq i\leq j-1$ such that
$x\notin\overline{B(a_i,R_{a_i})}$. 
Thus $B_{\rho_j}(x)$ intersects at most one of the previously chosen
support balls.  On the other hand, by the contradiction assumption, $B_{\rho_j}(x)$ intersects some support ball
$\overline{B(a_j,R_{a_j})}$ with
$a_j\notin\{a_1,\dots,a_{j-1}\}$.
Choose
$y_j\in
 B_{\rho_j}(x)\cap\overline{B(a_j,R_{a_j})}$.
This completes the induction.

Write $a_j\in A_{m_j}$.  Since every $A_m$ is finite and the points
$a_j$ are pairwise distinct, after passing to a subsequence, we have $m_j\to \infty$.
By \eqref{radius-bounds},
$R_{a_j}\to  0$.
From this and $y_j\to x$,
$$
 \abs{a_j-x}
 \leq \abs{a_j-y_j}+\abs{y_j-x}
 \leq R_{a_j}+ 1/j
 \to 0.
$$
Thus $a_j\to x$, so $x$ is an
accumulation point of $A$.  By Step~1, every accumulation point of
$A$ belongs to $K$.  This contradicts $x\notin K$ and
proves the observation.

Since $V_a\in C^{1,1}(B_1)$ is smooth away from $a$, it follows from the above observation that
$$
 W\in C^1(B_1\setminus K)\cap C^\infty(B_1\setminus E).
$$

Fix $a\in A$.  By the pairwise disjointness of the support balls and \eqref{core-and-cross-estimates-1},
$$
 u(x)=P(x)+\varepsilon_aw(x-a)
 \quad\text{for }\abs{x-a}<R_a/4.
$$
Since $w$ is not twice differentiable at the origin, $u$ is not twice differentiable
at $a$.

We now study $W$ at the points of $K$.
The situation here is subtler since the support balls accumulate to the points of $K$.  
Here is where we need to use \eqref{scaled-bump-estimates}. 

Let
$$ \textstyle
 G(y):=\sum_{a\in A}DV_a(y),\quad y\in B_1.
$$
This function 
is pointwise well-defined, since at most one summand is
nonzero at every point.  Clearly, we have
$DW=G$ in $B_1\setminus K$.

Fix $x\in K$. Since every
closed support ball is disjoint from $K$, we have
$W(x)=0$ and $G(x)=0$. Now we prove, for every $y\in B_1$,
\begin{equation}\label{K-point-bump-estimates}
 \abs{W(y)-W(x)}
 \leq C\abs{y-x}^3\quad \text{and}
 \quad
 \abs{G(y)-G(x)}
 \leq C\abs{y-x}^2.
\end{equation}
If $y$ lies outside all the closed support balls, then every $V_a(y)$
and every $DV_a(y)$ vanish. Therefore,
$W(y)=G(y)=0$, and thus \eqref{K-point-bump-estimates} clearly holds. So we may assume
$$
 y\in\overline{B(a,R_a)}
\quad 
\text{for some }a\in A,$$ 
then \eqref{radius-bounds} gives
$$
 \abs{y-x}
 \geq   \abs{a-x}-\abs{y-a}
 \geq \operatorname{dist}(a,K)-R_a
 \geq 9R_a.
$$
At the point $y$, we have
 $W(y)=V_a(y)$ and $G(y)=DV_a(y)$. 
From the above and \eqref{scaled-bump-estimates},
 \begin{align*}
 &\abs{W(y)-W(x)}
 =\abs{V_a(y)}
 \leq C\eta R_a^3
 \leq C\abs{y-x}^3,  \quad \text{and} \\
 &\abs{G(y)-G(x)}
 =\abs{DV_a(y)}
 \leq C\eta R_a^2
 \leq C\abs{y-x}^2.
 \end{align*}
Hence, \eqref{K-point-bump-estimates} is proved.

As a consequence of the first estimate in \eqref{K-point-bump-estimates}, we obtain the uniform estimate \eqref{cubic-expansion} for $x\in K$.
The first estimate in \eqref{K-point-bump-estimates} also implies that $W$ is differentiable at $x$ and
$DW(x)=G(x)=0$.
Since $x\in K$ was arbitrary and $DW=G$ was already known in
$B_1\setminus K$, it follows that $W$ is differentiable throughout
$B_1$ and
$$
 DW=G\quad\text{in }B_1.
$$
The second estimate in \eqref{K-point-bump-estimates} now gives that
$DW$ is differentiable at every $x\in K$, with derivative equal to
zero.  Hence $W$ is twice differentiable and
$D^2 W =0$ at every point of $K$. Therefore,
$$
 u \text{ is twice differentiable and }D^2u =D^2P \quad\text{for every point of } K.
$$

Now we prove that $DW=G$ is Lipschitz in $B_1$.  Let
$x,y\in B_1$.  
First suppose that $x$ and $y$ belong to the same
closed support ball $\overline{B(a,R_a)}$.  Then \eqref{scaled-bump-estimates} gives
 \begin{align*}
 \abs{G(x)-G(y)}
 &=\abs{DV_a(x)-DV_a(y)}\\
 &\leq C\varepsilon_a\abs{x-y}\leq C\abs{x-y}.
 \end{align*}

Suppose next that $x$ and $y$ do not belong to the same closed support
ball. If $x$ belongs to a closed support ball, denote its center by
$a$; if $x$ lies outside all the closed support balls, omit every term below
involving $a$.  Define $b$ analogously for $y$.  
By \eqref{scaled-bump-estimates}, we obtain
 \begin{align*}
 \abs{G(x)-G(y)}
 &\leq
 \abs{DV_a(x)-DV_a(y)}
 +
 \abs{DV_b(x)-DV_b(y)}\\
 &\leq
 C(\varepsilon_a+\varepsilon_b)\abs{x-y}\leq C\abs{x-y}.
 \end{align*}
Hence, $G=DW$ is Lipschitz in $B_1$.  Therefore, we have proved
$u\in C^{1,1}(B_1)$.

Finally, we prove $\Sing(u)=E$. 
Since $u$ is not twice differentiable at any point of $A$, we have $A\subset\Sing(u)$.
If $x\in K$, then every neighborhood of $x$ contains a point of $A$.  Since $u$ is not twice differentiable at that point, $u$
cannot be $C^2$ in any neighborhood of $x$.  Hence,
$K\subset\Sing(u)$, and thus $E\subset\Sing(u)$. 
Conversely, $u$ is smooth in $B_1\setminus E$, and therefore
$\Sing(u)\subset E$.  We conclude that
$\Sing(u)=E$. Claim 1 is proved. 
\end{proof}

By Step 1, the set $E=K\cup A$ is closed.  Moreover, since $K$ has
empty interior and $A$ is countable, $E$ has empty interior.  Thus
$E$ is nowhere dense in $B_1$.
Let
$$
 \Omega:=B_1\setminus E \neq \varnothing.
$$

\medskip
\noindent\textbf{Claim 2.}
There exists a constant $\theta>0$, independent of $K$, such that
$$
 \lambda_1\bigl(D^2u(x)-D^2u(y)\bigr)
 \geq\theta\norm{D^2u(x)-D^2u(y)}_F
 \quad\text{for all }x,y\in\Omega.
$$
In fact, one may choose $\theta=44^{-1}$ in this estimate. 

\begin{proof}[Proof of Claim 2]
Fix $x,y\in\Omega$.

{\it Case 1.}
There is $a\in A$ such that
$x,y\in B(a,R_a/4)$. In this case, write
$$
 x=a+re
 \quad\text{and}\quad
 y = a+ \rho f,
$$
where $r,\rho>0$ and $e,f\in\Sph^4$.  Using the linearity of $D^2P$, homogeneity of $w$, and \eqref{core-and-cross-estimates-1}, 
 \begin{align*}
 D^2u(x)-D^2u(y)
 &=rD^2P(e)-\rho D^2P(f)
   +\varepsilon_a\bigl(D^2w(e)-D^2w(f)\bigr)\\
 &=\varepsilon_a\left[
 K\left(e,\frac r{\varepsilon_a}\right)
 -K\left(f,\frac\rho{\varepsilon_a}\right)\right].
 \end{align*}
Applying Proposition
\ref{augmented-family-proposition}, we obtain 
$$
 \lambda_1\bigl(D^2u(x)-D^2u(y)\bigr)
 \geq {44}^{-1} \norm{D^2u(x)-D^2u(y)}_F.
$$

{\it Case 2.}
There is no $a\in A$ for which both $x$ and $y$ belong
to $B(a,R_a/4)$. 

For every $a\in A$, estimate
\eqref{core-and-cross-estimates-2} applies to $x$ and $y$.  At each of the
two points $x$ and $y$, at most one bump Hessian $D^2 V_a$ is nonzero.  Consequently, by the linearity of $D^2 P$ again,
$$
 D^2u(x)-D^2u(y)=D^2P(x-y)+\mathcal R,
$$
where the remainder matrix satisfies
\begin{equation}\label{remainder-bound}
\norm{\mathcal R}_F\leq2L\eta\abs{x-y} \le 2^{-1}{c_0}\abs{x-y}.
\end{equation}
In the above, the second inequality is from the choice of $\eta$.
Using these and \eqref{background-Hessian-estimate},
 $$
 \lambda_1\bigl(D^2u(x)-D^2u(y)\bigr)
 \geq\lambda_1\bigl(D^2P(x-y)\bigr)-\norm{\mathcal R}_F
 \geq  2^{-1}  {c_0} \abs{x-y}.
 $$
On the other hand, the identity
$\norm{D^2P(z)}_F=\sqrt{126}\abs{z}$ and
\eqref{remainder-bound} give
$$
 \norm{D^2u(x)-D^2u(y)}_F
 \leq\left(\sqrt{126}+ 2^{-1}{c_0} \right)\abs{x-y}.
$$
Combining the above two yields
$$
 \lambda_1\bigl(D^2u(x)-D^2 u(y)\bigr)
 \geq {c_0}({2\sqrt{126}+c_0})^{-1}
 \norm{D^2u(x)-D^2u(y)}_F.
$$

Now Claim 2 follows by taking
$\theta:=\min\left\{{44}^{-1},
 {c_0} (2\sqrt{126}+c_0)^{-1}\right\}=44^{-1}>0$.

\end{proof}

\medskip
\noindent\textbf{Step 3. Construction of the operator $F$.}

\begin{lemma}\label{operator-lemma}
For $n\geq2$, let $\mathcal H\subset\Sym(n)$ be nonempty and compact.
Suppose that there is $\theta>0$ such that
\begin{equation}\label{estimate-allow-F}
 \lambda_1(X-Y)\geq\theta\norm{X-Y}_F
\quad\text{for all }
X,Y\in\mathcal H.
\end{equation}
Then the operator
$F:\Sym(n)\to\R$ defined by
$$
 F(M)= \min_{X\in\mathcal H}
 \bigl\{\tr(M-X)+\sqrt{n}\theta^{-1}\lambda_1(M-X)\bigr\},\quad M\in \Sym(n), 
$$
satisfies $F=0$ on $\mathcal H$, and \eqref{eq:UE} with $\lambda=1$ and $\Lambda=1+\sqrt{n}\theta^{-1}$. 

\end{lemma}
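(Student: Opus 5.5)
The plan is to split the statement into its two parts: uniform ellipticity, which follows from the structure of the function being minimized, and the vanishing of $F$ on $\mathcal H$, which is where \eqref{estimate-allow-F} enters. Throughout, let $\mu:=\sqrt n\theta^{-1}$ and
$$
g(N):=\tr N+\mu\lambda_1(N),\qquad N\in\Sym(n),
$$
so that $F(M)=\min_{X\in\mathcal H} g(M-X)$. For fixed $N$ the map $X\mapsto g(M-X)$ is continuous, since $\lambda_1$ is $1$-Lipschitz in the Frobenius norm. Because $\mathcal H$ is compact, the minimum is therefore attained and $F$ is well defined and finite.

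\emph{Ellipticity.} I will first check \eqref{eq:UE} for $g$ itself. Take $M\in\Sym(n)$ and $N\geq0$. Then $\tr(M+N)-\tr M=\tr N$. By min--max, $0\le\lambda_1(M+N)-\lambda_1(M)\le\lambda_1(N)\le\tr N$, where the last inequality uses $N\geq0$. Adding these gives
$$
\tr N\le g(M+N)-g(M)\le(1+\mu)\tr N .
$$
Next I pass to the minimum. Let $X_0$ attain $F(M)$. Then
$$
F(M+N)\le g(M+N-X_0)\le g(M-X_0)+(1+\mu)\tr N=F(M)+(1+\mu)\tr N .
$$
Symmetrically, let $X_1$ attain $F(M+N)$. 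Then
$$
F(M)\le g(M-X_1)\le g(M+N-X_1)-\tr N=F(M+N)-\tr N .
$$
Together these give \eqref{eq:UE} with $\lambda=1$ and $\Lambda=1+\sqrt n\theta^{-1}$.

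\emph{Vanishing on $\mathcal H$.} Fix $Y\in\mathcal H$. Choosing $X=Y$ in the minimum gives $F(Y)\le g(0)=0$. It remains to show $g(Y-X)\ge0$ for every $X\in\mathcal H$. Set $Z=Y-X$. By \eqref{estimate-allow-F}, $\lambda_1(Z)\ge\theta\norm{Z}_F$. By Cauchy--Schwarz, $\abs{\tr Z}\le\sqrt n\norm{Z}_F$. Hence
$$
g(Z)\ge-\sqrt n\norm{Z}_F+\sqrt n\theta^{-1}\cdot\theta\norm{Z}_F=0,
$$
and therefore $F(Y)=0$. This is the only step that uses the hypothesis, and the constant $\sqrt n\theta^{-1}$ is chosen exactly so that the two terms cancel. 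With that observation the step is immediate; the main care needed elsewhere is to keep the direction of the inequality $\lambda_1(M+N)\le\lambda_1(M)+\lambda_1(N)$ straight.
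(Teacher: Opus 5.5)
Your proposal is correct and follows the paper's proof essentially step for step. It uses the same auxiliary function $\tr N+\mu\lambda_1(N)$, the same min--max bound $0\le\lambda_1(M+N)-\lambda_1(M)\le\lambda_1(N)\le\tr N$, and the same Cauchy--Schwarz cancellation to get $F=0$ on $\mathcal H$; you spell out the passage of the ellipticity inequality through the minimum in a little more detail than the paper does, and ``for fixed $N$'' in your first paragraph should read ``for fixed $M$''.
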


\begin{proof}[Proof of Lemma \ref{operator-lemma}]
Let
$$
 \mu:= {\sqrt n}/{\theta},
 \quad\text{and}
 \quad G(M):=\tr M+\mu\lambda_1(M) \quad \text{for }
M\in\Sym(n).
$$
For $N\geq 0$, the Rayleigh quotient characterization of $\lambda_1$ gives
$0\leq\lambda_1(M+N)-\lambda_1(M)
 \leq\lambda_1(N)\leq\tr N$.
Consequently, 
$\tr N\leq G(M+N)-G(M)
 \leq(1+\mu)\tr N$. 

Define
$$
 F(M):=\min_{X\in\mathcal H}G(M-X)\quad\text{for } M\in \Sym(n).
$$
The minimum exists by the compactness of $\mathcal H$.  The ellipticity inequality of $G$ yields
$$
 \tr N\leq F(M+N)-F(M)
 \leq(1+\mu)\tr N
 \quad\text{for every }N\geq 0.
$$

It remains to show that $F$ vanishes on $\mathcal H$.  Fix
$X_0\in\mathcal H$.  
  It follows from the definition of $F$ that $F(X_0)\leq 0$. On the other hand, for every
$X\in\mathcal H$, by the Cauchy--Schwarz inequality and the hypothesis of the lemma,
\begin{align*}
 G(X_0-X)
 &=
 \tr(X_0-X)+\mu\lambda_1(X_0-X)\\
 &\geq
 -\sqrt n\norm{X_0-X}_F
 +\mu\theta\norm{X_0-X}_F
 =0.
\end{align*}
Thus $F(X_0)\geq 0$, and hence $F(X_0)=0$. The lemma is proved.
\end{proof}

Let
$$
 \mathcal H:=\overline{\{D^2u(x)\mid x\in\Omega\}}\subset\Sym(5).
$$ 
The set $\mathcal H$ is nonempty since $\Omega\neq \varnothing$.  It is also compact.  Indeed, $D^2P$ is bounded
in $B_1$, and at each $x\in\Omega$ at most one bump Hessian $D^2 V_a$ is nonzero.
For that bump Hessian, if it exists,
$$
 \norm{D^2V_a(x)}_F\leq C\varepsilon_a
 \leq {C\eta}/{10}.
$$
Therefore, $D^2 u(\Omega)$ is bounded, and therefore its closure
$\mathcal H$ is compact.

By continuity, passing to the limit in
Claim 2 gives \eqref{estimate-allow-F} for $\theta$ independent of $K$. 
Then applying Lemma \ref{operator-lemma} yields a uniformly
elliptic operator $F:\Sym(5)\to\R$ such that $F=0$ on $\mathcal H$.
In particular,
\begin{equation}\label{classical-equation-on-Omega}
 F(D^2 u)=0\quad\text{in } \Omega.
\end{equation}
Moreover, the ellipticity constants of $F$ depend
only on $\theta$, so are independent of $K$.

\medskip
\noindent\textbf{Step 4. Completion of the proof.}
We first prove
\begin{equation}\label{pointwise-equation-on-K}
 F(D^2u)=0\quad\text{on } K \quad \text{in the pointwise sense}.
\end{equation}
Fix $x\in K$.
By Step 1, there exists $a_j\in A$ such that $a_j\to x$.
Write $R_j:=R_{a_j}$. By \eqref{radius-bounds},
$$
 10 R_j\leq \operatorname{dist}(a_j,K)
 \le \abs{a_j-x},
$$
and thus $R_j\to 0$.  Fix any $e\in\Sph^4$, and let
$$
 y_j:=a_j+2R_je  \in B_1.
$$

We now show that all bump functions $V_a$ vanish in a neighborhood of $y_j$.
  The distance between
$\overline{B(a_j,R_j)}$ and $y_j$ is $R_j$.  If $a \in A$ and $a \neq a_j$, then
\eqref{radius-bounds-2} gives
 \begin{align*}
 \abs{y_j-a}-R_a
 &\geq\abs{a_j-a}-2R_j -R_a \\
 &\geq5(R_j+R_a)-2R_j -R_a =3R_j+4 R_a> 3R_j.
 \end{align*}
Moreover,
$$
 \operatorname{dist}(y_j,K)
 \geq\operatorname{dist}(a_j,K)-2R_j
 \geq8R_j.
$$
It follows that $B(y_j,R_j/2)\subset B_1$ is
disjoint from $K$, and is disjoint from every closed support ball. Therefore,
$$
y_j\in \Omega\quad \text{and}\quad  D^2u(y_j)=D^2P(y_j).
$$
Note also that $y_j\to x$, since
$\abs{y_j-x}\leq\abs{a_j-x}+2R_j$. 
Then the above implies $D^2 u(y_j)\to D^2 P(x)\in \mathcal H$.  Claim~1 gives
$D^2u(x)=D^2P(x)$, and therefore \eqref{pointwise-equation-on-K} is proved.

\medskip

To conclude the proof, we need the following removable singularity result, the proof of which borrows ideas from \cite{CLN3}. We also remark that the same proof leads to a much more general statement which will appear elsewhere. 

\begin{lemma}
\label{countable-viscosity-lemma}
For $n\geq 1$, let $A\subset B_1$ be countable,
and let $u\in C^1(B_1)$.  Suppose that $u$ is twice differentiable at every
point of $B_1\setminus A$ and
$$
 F(D^2 u)=0\quad \text{in } B_1\setminus A,
$$
where $F\in C^0$ is degenerate elliptic\footnote{We say $F:\Sym(n)\to \R$ is degenerate elliptic if $F(M+N)\ge F(M)$ for every $M\in \Sym(n)$ and every $N\ge 0$.}.
Then $u$ is a viscosity solution of \eqref{equ-u}. 
\end{lemma}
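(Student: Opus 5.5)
The proof is a contact-point counting argument. Every test function touching $u$ can be tilted by a small linear function, producing a whole ball of parameters. Each tilt gives a touching point, each touching point outside $A$ contradicts the equation, and the touching points in $A$ can only account for countably many tilts. By symmetry (replace $u$, $F$ by $-u$, $\widetilde F(M):=-F(-M)$, which is again continuous and degenerate elliptic), it suffices to prove that $u$ is a viscosity subsolution.

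Suppose, for contradiction, that $\varphi\in C^2$ touches $u$ from above at $x_0\in B_1$, i.e. $\varphi\ge u$ near $x_0$ with equality at $x_0$, but $F(D^2\varphi(x_0))<0$. Replace $\varphi$ by
$$
\psi(x):=\varphi(x)+\delta\abs{x-x_0}^2 .
$$
By continuity of $F$ and of $D^2\varphi$, I fix $\delta>0$ and $r>0$ with $\overline{B_r(x_0)}\subset B_1$ and
$$
F(D^2\psi(x))<0\quad\text{for all }x\in \overline{B_r(x_0)}.
$$
Now $\psi-u\ge\delta r^2$ on $\partial B_r(x_0)$ and $\psi-u=0$ at $x_0$.

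For $p\in\R^n$ with $\abs{p}<\delta r/2$, consider $\psi_p(x):=\psi(x)+p\cdot(x-x_0)$. On $\partial B_r(x_0)$ we have $\psi_p-u\ge \delta r^2-\abs{p}r>0$, while $(\psi_p-u)(x_0)=0$. Hence $\psi_p-u$ attains its minimum over $\overline{B_r(x_0)}$ at some interior point $x_p\in B_r(x_0)$. Since $u\in C^1$, the first-order condition gives
$$
p=Du(x_p)-D\psi(x_p).
$$
The key consequence is that $p$ is uniquely determined by the contact point $x_p$.

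Next I rule out $x_p\notin A$. In that case $u$ is twice differentiable at $x_p$, and $\psi_p-u$ has a local minimum there. The second-order necessary condition for a twice differentiable function then gives $D^2u(x_p)\le D^2\psi_p(x_p)=D^2\psi(x_p)$. This condition is valid for pointwise twice differentiability with a Peano-type expansion, which is what the hypothesis means. Degenerate ellipticity then yields
$$
0=F(D^2u(x_p))\le F(D^2\psi(x_p))<0,
$$
a contradiction. Therefore $x_p\in A$ for every $p$ in the ball $\{\abs{p}<\delta r/2\}$.

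By the displayed formula for $p$, the map $p\mapsto x_p$ (any choice of minimizer) is injective. Indeed, each $a\in A$ can serve as a contact point for at most the single parameter $p=Du(a)-D\psi(a)$. So an uncountable ball of parameters injects into the countable set $A$, which is impossible. Hence $F(D^2\varphi(x_0))\ge0$, and $u$ is a viscosity subsolution.

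The only delicate points are the following.
\begin{enumerate}
\item The interior-minimum setup must be arranged so that it works uniformly for all small $p$; this is achieved by the strict quadratic penalization $\delta\abs{x-x_0}^2$.
\item The argument uses only that $u$ is $C^1$, which gives the first-order identity determining $p$, together with pointwise twice differentiability off $A$. No regularity of $u$ at points of $A$ beyond $C^1$ is needed.
\end{enumerate}
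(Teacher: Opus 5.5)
Your proof is correct and is essentially the paper's argument. The paper also adds $\delta\abs{y-x_0}^2$, tilts by a small linear function, and uses the first-order identity at an interior extremum; the only cosmetic difference is that it picks the tilt $p\in B_\rho(0)\setminus Dv(A\cap\Omega)$ in advance (with $v=u-\phi-\delta\abs{y-x_0}^2$), which forces the contact point off $A$, whereas you express the same countability as an injection $p\mapsto x_p$ into $A$. Your boundary estimate $\psi-u\ge\delta r^2$ on $\partial B_r(x_0)$ also needs $r$ small enough that $\varphi\ge u$ on all of $\overline{B_r(x_0)}$, so state that choice explicitly.
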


\begin{proof}[Proof of Lemma \ref{countable-viscosity-lemma}]
We only prove $F(D^2 u)\ge 0$ in $B_1$ in the viscosity sense, since the other inequality can be proved similarly. 
Let $\phi\in C^2$ touch $u$ from above at $x_0\in B_1$. Suppose the contrary that
$F(D^2\phi(x_0))<0$.  By continuity, there exist small $\delta,r>0$ such that
$$
u\le \phi  \quad \text{and}\quad 
 F(D^2\phi+2\delta I_n)<0\quad\text{on } \Omega\coloneqq \overline{B_r(x_0)}\subset B_1.
$$

Let $v(y):=u(y)-\phi(y)-\delta\abs{y-x_0}^2$.  Then $v(x_0)=0$ and
$v\leq-\delta r^2$ on $\partial\Omega$.  Take $0<\rho<\delta r/2$.  Since the set
$Dv(A\cap \Omega )$ is countable,
we may choose $p\in B_\rho(0)\setminus Dv(A\cap\Omega)$.

The function $w(y)\coloneqq  v(y)-p\cdot(y-x_0)$ attains its maximum on $\Omega$ at some $x\in\Omega$. Since $w(x_0)=0$ and
$w\le -\delta r^2+\rho r<0$ on $\partial\Omega$, the point $x$ must lie in the interior of $\Omega$. Therefore, $Dv(x)=p$, and the choice of $p$ gives $x\notin A$.
Since $w$ is twice differentiable at $x$, its local maximality at $x$ gives $D^2w(x)\le 0$, i.e., $D^2u(x)\leq D^2\phi(x)+2\delta I_n$. By the equation of $u$ and the degenerate ellipticity of $F$, we obtain
$$
 0=F(D^2u(x))\leq F(D^2\phi(x)+2\delta I_n)<0,
$$
a contradiction.  Therefore $F(D^2\phi(x_0))\ge 0$. The Lemma is proved.
\end{proof}

Finally, from Claim 1, \eqref{classical-equation-on-Omega}, and
\eqref{pointwise-equation-on-K}, an application of Lemma
\ref{countable-viscosity-lemma} gives \eqref{equ-u} 
in the viscosity sense. The proof of Theorem \ref{main-thm} is completed.

\section{Proof of Theorem~\ref{thm:main-2}}
\label{sec:proof-positive-result}

Let $\cP_2$ denote the space of polynomials on $\R^n$ of degree at most
two.
Following Armstrong--Silvestre--Smart \cite{ASC}, 
for an open set $U\subset\R^n$, $u\in C(U)$, and $x\in U$, define
$$
 \Psi(u,U)(x):=\inf\left\{A\geq0 \mid 
  \exists Q\in\cP_2 \text{ such that }
 \abs{u(y)-Q(y)}\leq 6^{-1}A\abs{x-y}^3,~\forall y\in U
 \right\},
$$
where the infimum of the empty set is $+\infty$.

Our proof of Theorem \ref{thm:main-2} uses the following $W^{3,\varepsilon}$ estimate of
Armstrong--Silvestre--Smart. 
A similar estimate was
used earlier by Caffarelli--Souganidis \cite{CaffarelliSouganidis}. 

\begin{proposition}\label{prop:third-order-tail}\textup{(\cite[Lemma~5.2]{ASC}).}
Let $F:\Sym(n)\to\R$ satisfy \eqref{eq:UE}, and let $u\in C(B_1)$ be a
viscosity solution of \eqref{equ-u} such that
$\norm{u}_{L^\infty(B_1)}\leq1$.  There are constants
$C>0$ and
$\varepsilon \in(0,1]$, depending only on $n$, $\lambda$ and $\Lambda$, such that
$$
 \left|\left\{x\in B_{1/2}\mid \Psi(u,B_1)(x)>t\right\}\right|
 \leq Ct^{-\varepsilon}
 \quad \text{for every }t>1.
$$
\end{proposition}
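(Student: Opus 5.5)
The plan is to derive this third-order estimate from the second-order $W^{2,\varepsilon}$ estimate of Lin and Caffarelli, applied to the first derivatives of $u$. This follows the remark in the introduction that the $W^{3,\varepsilon}$ estimate ``comes from applying to $Du$ the $W^{2,\varepsilon}$ estimate''.

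\textbf{Step 1: the derivatives of $u$ are Pucci-class functions.} By the interior $C^{1,\alpha}$ estimate for uniformly elliptic equations (Caffarelli), $u\in C^{1,\alpha}(B_{7/8})$ with $\norm{u}_{C^{1,\alpha}(B_{7/8})}\le C$, using only $\norm{u}_{L^\infty}\le 1$. Since $F$ does not depend on $x$, translates of $u$ solve the same equation. Hence, for a unit vector $e$ and small $h$, the difference quotient $(u(\cdot+he)-u)/h$ lies in the Pucci class $S(\lambda/n,\Lambda)$ (Caffarelli--Cabr\'e, Ch.~5). Letting $h\to0$ and using stability of viscosity (super/sub)solutions under uniform convergence, each $v_i:=\partial_iu$ lies in $S(\lambda/n,\Lambda)$ in $B_{7/8}$, with $\abs{v_i}\le C$.

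\textbf{Step 2: apply the $W^{2,\varepsilon}$ estimate to each $v_i$.} Let $\Theta(v_i)(x)$ be the least $t$ for which there exist paraboloids of opening $t$ touching $v_i$ from above and below at $x$ on $B_{3/4}$. The $W^{2,\varepsilon}$ estimate, after rescaling and a finite covering of $B_{1/2}$ by small balls, gives
$$
\abs{\{x\in B_{1/2}\mid \Theta(v_i)(x)>t\}}\le Ct^{-\varepsilon_0}.
$$
At a point $x$ with $\Theta(v_i)(x)\le t$ for all $i=1,\dots,n$, we get a vector $q=Du(x)$ and a matrix $M$ such that
$$
\abs{Du(y)-q-M(y-x)}\le Ct\abs{y-x}^2,\qquad y\in B_{3/4}.
$$

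\textbf{Step 3: integrate to obtain a cubic approximation of $u$.} Write $u(y)-u(x)=\int_0^1Du(x+s(y-x))\cdot(y-x)\,ds$; the segment stays in $B_{3/4}$ when $x\in B_{1/2}$ and $\abs{y-x}<1/4$. This gives
$$
\Bigl|u(y)-u(x)-q\cdot(y-x)-\tfrac12(y-x)^TM(y-x)\Bigr|\le Ct\abs{y-x}^3.
$$
Only the symmetric part of $M$ enters, so $Q(y):=u(x)+q\cdot(y-x)+\tfrac12(y-x)^T\tfrac{M+M^T}{2}(y-x)\in\cP_2$. For $y\in B_1$ with $\abs{y-x}\ge1/4$, the bounds $\abs{u}\le1$, $\abs{q}\le C$ and $\norm{M}\le C$ give $\abs{u(y)-Q(y)}\le C\le C'\abs{y-x}^3$. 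Hence $\Psi(u,B_1)(x)\le C(1+t)$, and
$$
\{\Psi(u,B_1)>t\}\cap B_{1/2}\subset\bigcup_i\{\Theta(v_i)>ct\}\quad\text{for }t>1,
$$
which yields the claim with $\varepsilon=\varepsilon_0$, possibly decreased to lie in $(0,1]$.

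The main obstacle I expect is Step 1, namely the rigorous statement that $\partial_iu\in S(\lambda/n,\Lambda)$ when $u$ is only a viscosity solution and $F$ is merely Lipschitz. I would do this through the difference-quotient argument together with closedness of the Pucci class under uniform limits. A secondary technical point is matching the domains in Step 2: the paraboloids must be relative to $B_{3/4}$ rather than $B_1$. I would handle this by using the localized version of the $W^{2,\varepsilon}$ estimate and absorbing the region $\abs{y-x}\ge1/4$ through the trivial bound, as in Step 3.
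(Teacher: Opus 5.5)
Your proposal is correct and follows the route the paper indicates: difference quotients together with the interior $C^{1,\gamma}$ estimate and stability put each $\partial_i u$ in the Pucci class, the $W^{2,\varepsilon}$ estimate is then applied to each $\partial_i u$, and your Step 3 plays the role of the elementary relation \cite[Lemma~5.1]{ASC}. One small correction: the two-sided paraboloid touching only gives $\norm{M}\le C(1+t)$, not $\norm{M}\le C$, but this still yields $\Psi(u,B_1)(x)\le C(1+t)$ and hence the conclusion, with small $t$ handled by the trivial bound $|B_{1/2}|$.
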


Although \cite[Lemma 5.2]{ASC} is formally stated under the standing
hypotheses of that paper, which include \eqref{F-C1}, its proof uses
only uniform ellipticity.  Indeed, the interior $C^{1,\gamma}$ estimate and stability of viscosity solutions show, by passing to the limit in difference
quotients, that every directional derivative of $u$ satisfies a
Pucci extremal inequality; see
\cite[Proposition 2.9, Proposition 5.5, and Corollary 5.7]
{Caffarelli1995FullyNE}.  Applying the $W^{2,\epsi}$ estimate 
\cite[Proposition 3.1]{ASC} to the directional derivatives, together with an elementary relation \cite[Lemma 5.1]{ASC}, yields the proposition.

We record the following standard estimate for quadratic polynomials. 

\begin{lemma}\label{lem:quadratic-polynomial-estimate}
There exists a constant $C_n>0$ such that, for every $Q\in\cP_2$, $x\in\R^n$, and $r>0$,
$$
 \abs{Q(x)}+r\abs{DQ(x)}+r^2\norm{D^2Q(x)}_F
 \leq C_n\norm{Q}_{L^\infty(B_r(x))}.
$$
\end{lemma}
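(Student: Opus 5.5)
The plan is to use the fact that $\cP_2$ is a finite-dimensional vector space, so all norms on it are equivalent, and then to reduce to the case $x=0$, $r=1$ by translation and scaling.

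\emph{Normalization.} Given $Q\in\cP_2$, $x\in\R^n$ and $r>0$, define
$$
\widetilde Q(y):=Q(x+ry).
$$
Then $\widetilde Q\in\cP_2$ and
$$
\widetilde Q(0)=Q(x),\qquad D\widetilde Q(0)=r\,DQ(x),\qquad D^2\widetilde Q(0)=r^2D^2Q(x).
$$
Moreover, $\norm{\widetilde Q}_{L^\infty(B_1)}=\norm{Q}_{L^\infty(B_r(x))}$. So the whole statement reduces to the case $x=0$ and $r=1$.

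\emph{Norm equivalence.} On $\cP_2$ consider the two functionals
$$
N_1(Q):=\abs{Q(0)}+\abs{DQ(0)}+\norm{D^2Q(0)}_F,\qquad N_2(Q):=\norm{Q}_{L^\infty(B_1)}.
$$
Both are seminorms. $N_1$ is a norm because the coefficients of a quadratic polynomial are exactly $Q(0)$, $DQ(0)$ and $D^2Q(0)$. $N_2$ is a norm because a polynomial vanishing on the open set $B_1$ is identically zero. Since $\cP_2$ is finite dimensional, all norms on it are equivalent. Hence there is $C_n$, depending only on $n$ (through the dimension of $\cP_2$), with $N_1\le C_nN_2$. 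Undoing the scaling gives the claimed estimate with the same $C_n$.

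If an explicit argument is preferred to norm equivalence, one can recover the coefficients directly. Take $\abs{Q(0)}\le\norm{Q}_{L^\infty(B_1)}$. Recover the pure second derivatives and the gradient from symmetric differences along $\pm e_i/2$. Recover the mixed second derivatives from values along $(e_i\pm e_j)/2$. Each such formula is a fixed linear combination of at most a bounded number of point values in $B_1$, which yields an explicit $C_n$.

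There is no real obstacle here. The only point needing care is checking that $N_2$ is a norm, that is, that $Q$ vanishing on $B_1$ forces $Q\equiv 0$, together with the scaling bookkeeping.
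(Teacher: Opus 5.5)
Your proof is correct, but your main argument differs from the one the paper has in mind. The paper omits the details, but indicates symmetric differences along arbitrary unit directions. For $\abs{e}=1$ one has $Q(x+re)-Q(x-re)=2r\,DQ(x)\cdot e$ and $Q(x+re)+Q(x-re)-2Q(x)=r^2e^TD^2Q(x)e$. By continuity of $Q$ up to $\partial B_r(x)$, this gives $r\abs{DQ(x)}\le\norm{Q}_{L^\infty(B_r(x))}$ and $r^2\max_{\abs{e}=1}\abs{e^TD^2Q(x)e}\le 4\norm{Q}_{L^\infty(B_r(x))}$. The latter maximum is the operator norm of the symmetric matrix $D^2Q(x)$, and $\norm{\cdot}_F\le\sqrt n$ times the operator norm. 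So one gets the estimate with the explicit constant $C_n=2+4\sqrt n$, with no separate treatment of mixed derivatives.

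Your primary route (rescale to $x=0$, $r=1$, then invoke equivalence of the two norms on the finite-dimensional space $\cP_2$) is shorter and fully rigorous, but its constant is non-constructive. Your explicit fallback is the same finite-difference idea restricted to coordinate directions. That restriction is why you need the extra mixed differences along $(e_i\pm e_j)/2$. The paper only needs the existence of some $C_n$ depending on $n$: for attaining the infimum in $\omega_u$ and in Lemma~\ref{lem:decreasing-scales}. So either version suffices.
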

The proof of this lemma is elementary by using the expressions of $Q(x\pm r e)$ for $\abs{e}=1$. We omit the details.

In the proof of Theorem \ref{thm:main-2},
we use the following scaled error of best quadratic approximation.
For $x\in\R^n$, $r>0$, and $u\in C(\overline{B_r(x)})$, define
$$
 \omega_u(x,r):=\frac1{r^2}
 \inf_{Q \in\cP_2}\norm{u-Q}_{L^\infty(B_r(x))}.
$$
The infimum is attained.  Indeed, a minimizing sequence is uniformly bounded
on $B_r(x)$, and Lemma~\ref{lem:quadratic-polynomial-estimate} gives a uniform
bound for all of its coefficients. Passing to a subsequence then yields a minimizer.

The following standard telescoping argument gives a criterion
for the existence of a $C^{2,\alpha}$ expansion; compare
\cite[proof of Proposition~4.1]{ASC}.
 
\begin{lemma}\label{lem:decreasing-scales}
Let $x\in\R^n$, $\alpha\in [0,1)$, and let $r_k=2^{-k}r_0>0$ for $k\ge 0$. 
If $u\in C(\overline{B_{r_0}(x)})$ satisfies
$$
 \omega_u(x,r_k)\leq k^{-2}r_k^\alpha
 \quad\text{for all sufficiently large }k,
$$
then $u$ has a $C^{2,\alpha}$ expansion at $x$.
\end{lemma}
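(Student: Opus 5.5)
The plan is the usual telescoping argument over best quadratic approximants at dyadic scales. For each $k$, let $Q_k\in\cP_2$ be a minimizer in the definition of $\omega_u(x,r_k)$; the infimum is attained, as noted after the definition of $\omega_u$. Then
$$\norm{u-Q_k}_{L^\infty(B_{r_k}(x))}=r_k^2\,\omega_u(x,r_k)\le k^{-2}r_k^{2+\alpha}$$
for all $k\ge k_0$. Since $B_{r_{k+1}}(x)\subset B_{r_k}(x)$, the triangle inequality gives
$$\norm{Q_{k+1}-Q_k}_{L^\infty(B_{r_{k+1}}(x))}\le 2k^{-2}r_k^{2+\alpha}.$$
Lemma~\ref{lem:quadratic-polynomial-estimate}, applied with radius $r_{k+1}$ to the quadratic $Q_{k+1}-Q_k$, then bounds its Taylor data at $x$:
$$\abs{(Q_{k+1}-Q_k)(x)}\le Ck^{-2}r_k^{2+\alpha},\quad \abs{D(Q_{k+1}-Q_k)(x)}\le Ck^{-2}r_k^{1+\alpha},\quad \norm{D^2(Q_{k+1}-Q_k)}_F\le Ck^{-2}r_k^{\alpha}.$$

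Since $\sum k^{-2}<\infty$ and $r_k\le r_0$, all three series converge. So $Q_k(x)\to c$, $DQ_k(x)\to p_x$ and $D^2Q_k\to A_x$. Moreover, $c=u(x)$, because $\abs{u(x)-Q_k(x)}\le k^{-2}r_k^{2+\alpha}\to0$. Set $Q(y)=u(x)+p_x\cdot(y-x)+\tfrac12(y-x)^TA_x(y-x)$. The tail bounds are
$$\abs{Q_k(x)-u(x)}\le\epsilon_k r_k^{2+\alpha},\quad \abs{DQ_k(x)-p_x}\le \epsilon_k r_k^{1+\alpha},\quad \norm{D^2Q_k-A_x}_F\le\epsilon_k r_k^{\alpha},$$
where $\epsilon_k:=C\sum_{j\ge k}j^{-2}\to0$. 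Here I use $r_j\le r_k$ for $j\ge k$, and the factor $k^{-2}$ is what makes $\epsilon_k\to0$; when $\alpha=0$ the geometric decay alone would not suffice, and the $k^{-2}$ is exactly what rescues that case.

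To conclude, take $h$ with $r_{k+1}\le\abs{h}<r_k$ and $k\ge k_0$, and split
$$\abs{u(x+h)-Q(x+h)}\le\abs{u(x+h)-Q_k(x+h)}+\abs{Q_k(x+h)-Q(x+h)}.$$
The first term is at most $k^{-2}r_k^{2+\alpha}$. For the second, expand the quadratic $Q_k-Q$ exactly at $x$ and use the tail bounds; this gives at most $C\epsilon_k r_k^{2+\alpha}$. Since $r_k\le 2\abs{h}$, the total is at most $C(k^{-2}+\epsilon_k)\abs{h}^{2+\alpha}=o(\abs{h}^{2+\alpha})$, because $k\to\infty$ as $h\to0$. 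This is the required $C^{2,\alpha}$ expansion.

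There is no real obstacle. The only points to check are that the summability comes from $k^{-2}$ rather than from powers of $r_k$ (needed when $\alpha=0$), and that the constant from Lemma~\ref{lem:quadratic-polynomial-estimate} is uniform in the radius.
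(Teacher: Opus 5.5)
Your proposal is correct and follows essentially the same route as the paper: minimizers at dyadic scales, the triangle inequality on $B_{r_{k+1}}(x)$, Lemma~\ref{lem:quadratic-polynomial-estimate} for the coefficient differences, and summed tails that are $o(r_k^{\alpha})$ thanks to the factor $k^{-2}$. The only cosmetic difference is that you bound the tails by $C r_k^{\alpha}\sum_{j\ge k}j^{-2}$ instead of the paper's $S_k=\sum_{j\ge k}j^{-2}r_j^\alpha$, and you take $r_{k+1}\le\abs{h}<r_k$ so that $x+h$ lies in the open ball. Neither affects the argument.
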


\begin{proof}
After replacing $r_0$ by one of the $r_k$ and relabeling, we may assume
that the hypothesis holds for every $k\geq1$.  Let $P_k\in \cP_2$ be a minimizer in the definition of $\omega_u(x,r_k)$, denoted by
$$
 P_k(x+h)=c_k+p_k\cdot h+ 2^{-1} h^TA_kh.
$$
Then, by the lemma hypothesis and the triangle inequality,
$$
 \norm{P_{k+1}-P_k}_{L^\infty(B_{r_{k+1}}(x))}
 \leq 2 k^{-2}r_k^{2+\alpha}.
$$
Applying Lemma \ref{lem:quadratic-polynomial-estimate} at radius
$r_{k+1}=r_k/2$ gives
\begin{align*}
 \abs{c_{k+1}-c_k} \leq Ck^{-2}r_k^{2+\alpha},\quad
 \abs{p_{k+1}-p_k} \leq Ck^{-2}r_k^{1+\alpha},\quad \text{and}\quad 
 \norm{A_{k+1}-A_k}_F \leq Ck^{-2}r_k^\alpha.
\end{align*}
Here and below, the constant $C$ depends only on $n$, and may vary from line to line. 
From the above, all the three coefficient sequences converge. Denote their limits by
$c,p,A$, and let
$$P_x(x+h)=c+p\cdot h+2^{-1}h^TA h. $$

Summing the coefficient estimates gives
$$
 \abs{c-c_k}\leq Cr_k^2 S_k,
 \quad
 \abs{p-p_k}\leq Cr_k S_k,
 \quad \text{and} \quad
 \norm{A-A_k}_F\leq C S_k, 
$$
where $S_k :=\sum_{j=k}^\infty j^{-2}r_j^\alpha=o(r_k^\alpha)$. The factor $k^{-2}$ in the hypothesis yields this little-$o$ estimate.
From the above and the lemma hypothesis again, we obtain
$$
 \norm{u-P_x}_{L^\infty(B_{r_k}(x))}
 =o(r_k^{2+\alpha}).
$$
Moreover,
$\abs{c_k-u(x)}\leq r_k^2\omega_u(x,r_k)\to0$, so $c=u(x)$.  If
$r_{k+1}<\abs{h}\leq r_k$, then $r_k<2\abs{h}$, and the above
estimate yields
$u(x+h)-P_x(x+h)=o(\abs{h}^{2+\alpha})$.  This is the desired expansion.
\end{proof}

The next lemma shows that a large quadratic approximation error on one ball
forces $\Psi$ to be large on its concentric half ball.

\begin{lemma}\label{lem:bad-scale}
Let $x\in\R^n$ and $r>0$. If $u\in C(\overline{B_r(x)})$ satisfies 
$$\omega_u(x,r)>a>0,$$
then
$\Psi(u,B_r(x)) > a/r$ on $B_{r/2}(x)$.
\end{lemma}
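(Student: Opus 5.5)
We argue by contraposition: fix $z\in B_{r/2}(x)$ and suppose $\Psi(u,B_r(x))(z)\le a/r$. We show this forces $\omega_u(x,r)\le a$, contradicting the hypothesis.

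\textbf{Step 1: extract a good quadratic.} By the definition of $\Psi$ as an infimum, for every $A>\Psi(u,B_r(x))(z)$ there is $Q\in\cP_2$ with
$$
\abs{u(y)-Q(y)}\le 6^{-1}A\abs{z-y}^3 \quad\text{for all } y\in B_r(x).
$$
If $\Psi(u,B_r(x))(z)<a/r$, we may choose $A<a/r$ directly.

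\textbf{Step 2: bound the error on the ball.} For $y\in B_r(x)$ we have $\abs{z-y}\le\abs{z-x}+\abs{x-y}<r/2+r=3r/2$. Hence
$$
\norm{u-Q}_{L^\infty(B_r(x))}\le 6^{-1}A\,(3r/2)^3=\tfrac{27}{48}Ar^3=\tfrac{9}{16}Ar^3 .
$$
Dividing by $r^2$ gives $\omega_u(x,r)\le \tfrac{9}{16}Ar$.

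\textbf{Step 3: conclude.} Since $A$ can be taken arbitrarily close to $\Psi(u,B_r(x))(z)$, we get $\omega_u(x,r)\le\tfrac{9}{16}\,r\,\Psi(u,B_r(x))(z)$. If $\Psi(u,B_r(x))(z)\le a/r$, this yields $\omega_u(x,r)\le \tfrac{9}{16}a<a$, contradicting $\omega_u(x,r)>a$.

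Note that the boundary case $\Psi=a/r$ exactly is also covered, because of the slack factor $9/16<1$: choose any $A$ with $a/r\le A<\tfrac{16}{9}\,a/r$ to get $\omega_u(x,r)<a$. Therefore $\Psi(u,B_r(x))>a/r$ at every point of $B_{r/2}(x)$.

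There is no genuine obstacle here. The only point needing care is the geometric constant: the radius $r/2$ of the inner ball is what makes $(3/2)^3/6=9/16<1$, so that the strict inequality survives. In fact $\Psi$ is bounded below by $\tfrac{16}{9}\,a/r$, which is stronger than claimed.
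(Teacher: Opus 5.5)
Your argument is correct and is essentially the paper's proof: take a quadratic nearly attaining the infimum in $\Psi(u,B_r(x))(z)$, use $\abs{z-y}\le 3r/2$ on $B_r(x)$ to get $\omega_u(x,r)\le\tfrac{9}{16}\,r\,\Psi(u,B_r(x))(z)$, and conclude $\Psi(u,B_r(x))(z)\ge\tfrac{16}{9r}\,\omega_u(x,r)>a/r$. The differences are cosmetic: you argue by contraposition, while the paper argues directly with a $\delta$-approximation after setting aside the case $\Psi=+\infty$. In your closing note the choice should be $a/r<A<\tfrac{16}{9}\,a/r$, since $A$ must exceed $\Psi$, but Step 3 already covers that boundary case.
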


\begin{proof}
Fix $y\in B_{r/2}(x)$. If $\Psi(u,B_r(x))(y)=+\infty$, there is nothing to prove.
We may therefore assume that $\Psi(u,B_r(x))(y)<+\infty$.
Then, for every $\delta>0$, there exists
$Q\in\cP_2$ such that
$$
 \abs{u(z)-Q(z)}
 \leq 6^{-1}\big(\Psi(u,B_r(x))(y)+\delta\big)\abs{z-y}^3
 \quad\text{for every }z\in B_r(x).
$$
Since $\abs{z-y}\leq3r/2$ for $z\in B_r(x)$, it follows that
$$
 \omega_u(x,r)
 \leq \frac{1}{r^2}
       \norm{u-Q}_{L^\infty(B_r(x))}
 \leq \frac9{16}r\big(\Psi(u,B_r(x))(y)+\delta\big).
$$
Sending $\delta$ to zero yields
$$
 \Psi(u,B_r(x))(y)
 \geq\frac{16}{9r}\omega_u(x,r)
 >\frac{a}{r}.
$$
The lemma is therefore proved.
\end{proof}

Now we are ready to give the proof of Theorem \ref{thm:main-2}. Our terminology and notation related to Hausdorff measure follow those in \cite[Chapter 2]{EvansGariepy}.

\begin{proof}[Proof of Theorem~\ref{thm:main-2}]
Assume first that $\norm{u}_{L^\infty(B_1)}\leq1$. We remove this normalization at the end of this proof.  Fix
$0\leq\alpha<1$ and $0<r_0<1/4$. For every $k\ge 1$, let
$$
 r_k:=2^{-k}r_0,
 \quad
 a_k:=k^{-2}r_k^\alpha,
 \quad \text{and} \quad 
 E_k:=\{x\in B_{1/4}\mid \omega_u(x,r_k)>a_k\}.
$$
Then Lemma \ref{lem:decreasing-scales} yields
\begin{equation}\label{eq:limsup-bad-sets} 
 \Sigma_{2,\alpha}(u)\cap B_{1/4}
 \subset\bigcap_{K=1}^\infty\bigcup_{k=K}^\infty E_k.
\end{equation}

For each $k$, we prove that there exist finitely many points $x_{k,1}, \dots, x_{k,N_k}$ in $E_k$ such that the balls $B_{r_k/2}(x_{k,i})\subset B_{1/2}$ are pairwise disjoint, and 
$$ \textstyle
 E_k\subset\bigcup_{i=1}^{N_k}B_{r_k}(x_{k,i}).
$$
Indeed, if $E_k$ is empty, we simply take $N_k=0$. So we may assume that $E_k$ is nonempty.
Choose $x_{k,1}\in E_k$.
After $x_{k,1},\ldots,x_{k,j}$ have been chosen, stop if
$E_k\subset\bigcup_{i=1}^jB_{r_k}(x_{k,i})$; if this inclusion fails, choose
an $x_{k,j+1}$ in the difference.  Distinct chosen points are at distance at least $r_k$, so the balls $B_{r_k/2}(x_{k,i})$ are pairwise disjoint.  They
are contained in $B_{1/2}$ because $x_{k,i}\in B_{1/4}$ and
$r_k/2<1/4$.  Hence, after any $j$ choices, it holds that
$j\abs{B_{r_k/2}}\leq\abs{B_{1/2}}$.  Thus the procedure must stop after
finitely many choices, with $E_k$ covered by $\bigcup_{i=1}^{N_k}B_{r_k}(x_{k,i})$.

For each $i$, by Lemma \ref{lem:bad-scale} and ${B_{r_k/2}(x_{k,i})}\subset B_{1/2}$, we obtain
$$
 B_{r_k/2}(x_{k,i})
 \subset\left\{y\in B_{1/2} \mid 
 \Psi(u,B_1)(y)> {a_k}/{r_k}\right\}.
$$
From this and the pairwise disjointness of the
smaller balls $B_{r_k/2}(x_{k,i})$, applying Proposition \ref{prop:third-order-tail} yields, for
all sufficiently large $k$,
\begin{align*}
 N_k\abs{B_{r_k/2}}
 &\leq\left|\left\{y\in B_{1/2} \mid
 \Psi(u,B_1)(y)> {a_k}/{r_k}\right\}\right|\\
 &\leq C\left({a_k}/{r_k}\right)^{-\varepsilon}.
\end{align*}
Here we used $\alpha<1$ for $a_k/r_k=k^{-2}r_k^{\alpha-1}\to\infty$, and positive constants $\epsi, C$ depend only on $n$, $\lambda$, and $\Lambda$. Here and below, $C$ may vary from line to line, while
$\varepsilon$ denotes the fixed exponent in
Proposition~\ref{prop:third-order-tail}.
It follows from the above that 
$$
 N_k\leq C k^{2\varepsilon}
 r_k^{-n+\varepsilon(1-\alpha)}\quad \text{for all sufficiently large }k.
$$

Let $s>n-\varepsilon(1-\alpha)$. For every sufficiently large $K$,  
\eqref{eq:limsup-bad-sets} implies that the balls
$\{B_{r_k}(x_{k,i})\}_{k\ge K,~ 1\le i\le N_k}$ cover
$\Sigma_{2,\alpha}(u)\cap B_{1/4}$.  Thus the definition of Hausdorff
measure gives
\begin{align*}
 \cH^s_{2r_K}(\Sigma_{2,\alpha}(u)\cap B_{1/4})
 \leq C\sum_{k=K}^\infty N_kr_k^s
 \leq C\sum_{k=K}^\infty
 k^{2\varepsilon}r_k^{s-n+\varepsilon(1-\alpha)}.
\end{align*}
The last exponent is positive, so the right hand side
tends to zero as $K\to\infty$.   Hence,
\begin{equation}\label{eq:local-Hausdorff-zero}
 \cH^s(\Sigma_{2,\alpha}(u)\cap B_{1/4})=0
 \quad\text{for every }s>n-\varepsilon(1-\alpha).
\end{equation}
This proves that the Hausdorff dimension of $\Sigma_{2,\alpha}(u)\cap B_{1/4}$ is at most $n-\epsi(1-\alpha)$. 

Finally, we remove the normalization on $\norm{u}_{L^\infty}$.  Let $u$ be a general solution in the statement of Theorem \ref{thm:main-2}. We claim that for every ball $B_R(x_0)\Subset B_1$, 
\begin{equation}\label{Haus-estimate-local}
 \cH^s(\Sigma_{2,\alpha}(u)\cap B_{R/4}(x_0))=0
 \quad\text{for every }s>n-\varepsilon(1-\alpha).
\end{equation}

Assuming this claim, we now conclude the proof.
The ball $B_1$ has a countable covering by those balls $B_{R/4}(x_0)$ in \eqref{Haus-estimate-local}.
Countable subadditivity yields
$\cH^s(\Sigma_{2,\alpha}(u))=0$ for every $s>n-\varepsilon(1-\alpha)$, and therefore
 $\dim_{\cH}\Sigma_{2,\alpha}(u)
 \leq n-\varepsilon(1-\alpha)$.

It remains to prove the claim.
Indeed, let
$L:=\norm{u-u(x_0)}_{L^\infty(B_R(x_0))}$. We may assume $L>0$, since otherwise $u$ is constant in the ball, and \eqref{Haus-estimate-local} follows.
Consider
$$
 \widehat u(z): = L^{-1}\bigl( {u(x_0+Rz)-u(x_0)}\bigr)
 \quad\text{and}\quad 
 \widehat F(M):=L^{-1}{R^2}
 F\left(L{R^{-2}}M\right).
$$
Then $\widehat F$ has the same ellipticity constants as $F$,
$\norm{\widehat u}_{L^\infty(B_1)}\leq1$, and
$\widehat F(D^2\widehat u)=0$ in $B_1$.  
Therefore,
\eqref{eq:local-Hausdorff-zero} gives $\cH^s(\Sigma_{2,\alpha}(\widehat{u})\cap B_{1/4})=0$ for every $s> n-\epsi (1-\alpha)$. 
Note that $\widehat{u}$ has a $C^{2,\alpha}$ expansion at $z\in B_1$ if and only if $u$ has a $C^{2,\alpha}$ expansion at $x_0+Rz\in B_R(x_0)$. Then the translation invariance and the scaling property of 
Hausdorff measure yield \eqref{Haus-estimate-local}.

\end{proof}

\appendix

\section{Proof of Lemma~\ref{orbit-reduction}}
\label{appendix-orbit-reduction}

We provide a proof somewhat different from the one in \cite{NTV}.
Let $\Sym_0(3)$ be the space of traceless symmetric $3\times 3$ matrices, endowed
with the Frobenius inner product, and define
$$
 J(x):=
 \begin{pmatrix}
 \dfrac{x_1}{\sqrt6}+\dfrac{x_2}{\sqrt2}
 &-\dfrac{z_3}{\sqrt2}&\dfrac{z_2}{\sqrt2}\\[2mm]
 -\dfrac{z_3}{\sqrt2}
 &\dfrac{x_1}{\sqrt6}-\dfrac{x_2}{\sqrt2}
 &\dfrac{z_1}{\sqrt2}\\[2mm]
 \dfrac{z_2}{\sqrt2}&\dfrac{z_1}{\sqrt2}
 &-\dfrac{2x_1}{\sqrt6}
 \end{pmatrix}
$$
for $x=(x_1,x_2,z_1,z_2,z_3)\in\R^5$. 
One may verify $$ \norm{J(x)}_F=\abs{x}.$$ Thus
$J:\R^5\to\Sym_0(3)$ is a linear isometry.  A
direct computation gives
\begin{equation}\label{appendix-J-identities}
 P(x)=-3\sqrt6\det J(x).
\end{equation}

Fix $e\in\Sph^4$, and let $a,b,c$ be the eigenvalues of $J(e)$.  Then
$a+b+c=0$ and $a^2+b^2+c^2=1$.
Using $a+b+c=0$, the elementary identity
$$
 (a-b)^2(b-c)^2(c-a)^2
 =2^{-1}(a^2+b^2+c^2)^3-27a^2b^2c^2
$$
shows that $\abs{abc}\leq1/(3\sqrt6)$.  Therefore,
\eqref{appendix-J-identities} gives $P(e)\in[-1,1]$.

The function
$$
 g(p):= p(3-p^2)/2
$$
is strictly increasing from $[-1,1]$ onto itself.  Hence, there is a
unique $p\in[-1,1]$ such that $P(e)=g(p)$.  Direct substitution in
\eqref{cartan-cubic} gives $P(e_p)=g(p)$.
Hence,
the matrices $J(e)$ and $J(e_p)$ have the same Frobenius norm and
determinant.  Since the characteristic
polynomial of a traceless $3\times 3$ matrix $Y$ is of the form 
$\det(\lambda I_3-Y)
 =\lambda^3- 2^{-1}\norm{Y}_F^2\lambda-\det Y$,
matrices $J(e)$ and $J(e_p)$ have the same eigenvalues. By the spectral theorem, there is $Q\in O(3)$ such that
$$J(e)=QJ(e_p)Q^T.$$
Define $O:\R^5\to\R^5$ by
$J(Ox)=QJ(x)Q^T$.
Since $J$ is an isometry, we obtain $O\in O(5)$ and $Oe_p=e$. By
\eqref{appendix-J-identities}, we have
$P(Ox)=-3\sqrt6\det\bigl(QJ(x)Q^T\bigr)=P(x)$. 
The lemma is proved.

\section{\texorpdfstring{On the notion of pointwise
$C^{2,\alpha}$ expansion}{On the notion of pointwise C2-alpha expansion}}
\label{app-expansion}

\begin{lemma}
\label{prop:punctual-versus-twice}
If $u\in C^{1,1}_{\mathrm{loc}}(B_1)$ has a $C^{2,0}$ expansion at
$x_0\in B_1$, then $u$ is twice differentiable at $x_0$.
\end{lemma}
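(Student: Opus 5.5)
We must show: if $u\in C^{1,1}_{\mathrm{loc}}$ and $u(x_0+h)=u(x_0)+p\cdot h+\tfrac12 h^TAh+o(\abs{h}^2)$, then $Du$ is differentiable at $x_0$ with $D^2u(x_0)=A$. After subtracting the quadratic polynomial $u(x_0)+p\cdot(y-x_0)+\tfrac12 (y-x_0)^TA(y-x_0)$, we may assume $u(x_0+h)=o(\abs{h}^2)$ and must show $Du(x_0)=0$ and $Du(x_0+h)=o(\abs{h})$. The subtraction preserves $C^{1,1}$. Fix $r>0$ small with $\overline{B_{2r}(x_0)}\subset B_1$, and let $L$ be the Lipschitz constant of $Du$ there.

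First, $Du(x_0)=p$ is immediate: the expansion implies $u$ is differentiable at $x_0$ with gradient $p$, and $u$ is $C^1$. After the reduction, $Du(x_0)=0$.

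The key step is an interpolation estimate converting smallness of $u$ into smallness of $Du$ using the Lipschitz bound. Fix $h$ with $\abs{h}=\rho$ small, and let $\sigma(\rho):=\sup_{\abs{k}\le 2\rho}\abs{u(x_0+k)}/\abs{k}^2$, which tends to $0$ by hypothesis. Set $y=x_0+h$. For any unit vector $e$ and any $t\in(0,\rho]$, Taylor's formula with the Lipschitz gradient gives
$$
 \abs{u(y+te)-u(y)-t\,Du(y)\cdot e}\le \tfrac12 L t^2 .
$$
The two values of $u$ are bounded by $4\sigma(\rho)\rho^2$. Hence
$$
 \abs{Du(y)\cdot e}\le 8\sigma(\rho)\rho^2/t+Lt/2 .
$$
Choose $t=\rho\sqrt{\sigma(\rho)}$; this is at most $\rho$ once $\sigma\le 1$. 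Then $\abs{Du(y)}\le (8+L/2)\sqrt{\sigma(\rho)}\,\rho=o(\rho)$. Therefore $Du(x_0+h)-Du(x_0)=o(\abs{h})$, so $Du$ is differentiable at $x_0$ with derivative $0$. Undoing the subtraction gives $D^2u(x_0)=A$.

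The only delicate point is the optimization of the step size $t$. It must be chosen depending on $\sigma(\rho)$, so that both error terms are $o(\rho)$ simultaneously; a fixed multiple of $\rho$ would only yield $O(\rho)$. This choice is exactly where the $C^{1,1}$ hypothesis enters, and it is why the implication can fail for $C^{1,\alpha}$, as in Example~\ref{ex-b2}.
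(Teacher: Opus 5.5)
Your proof is correct and is essentially the paper's argument: subtract the quadratic, then bound $Du(x_0+h)\cdot e$ by a one-sided difference quotient with step $t=\abs{h}\sqrt{\omega(2\abs{h})}$ and the Lipschitz bound on $Du$; only the constants differ. The one case you skip is $\sigma(\rho)=0$, where your $t=0$ is not admissible. It is trivial, since then $u\equiv 0$ on $B_{2\rho}(x_0)$ (or you can let $t\to 0$ in your bound), and the paper disposes of it at the outset by assuming $\omega>0$.
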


The $C^{1,1}_{\mathrm{loc}}$ assumption in the above lemma cannot be weakened to $C^{1,\alpha}_{\mathrm{loc}}$ for any $\alpha<1$, since otherwise the existence of a $C^{2,0}$ expansion need not imply twice differentiability, as shown by the following example. In this paper, $C^{1,0}_{\mathrm{loc}}$ and $C^{1,0}$ should be read as $C^1$.

\begin{example}\label{ex-b2}
     For every $n\ge 1$ and every $0\leq\alpha<1$, there exists $u\in C^{1,\alpha}(B_1)$ satisfying 
$u(h)=o(\abs{h}^2)$ as $h\to 0$,
but $u$ is not twice differentiable at $0$.

Indeed, it suffices to construct such an example when $n=1$, since the function $x\mapsto u(x_1)$ then gives an example in every higher dimension.   
Let $b:=(1-\alpha)/(1+\alpha)>0$ and $a:=b+ 2^{-1}(3+\alpha)= 2+ 2^{-1}b(1-\alpha)>2$. Define
$$
 u(t):=
  t^a\sin(t^{-b})~\text{ when}~t>0,\quad \text{and}\quad 
  u(t):=0~\text{when}~t\leq  0.
$$
Then $u\in C^{1,\alpha}(-1,1)$ satisfies $u(t)=o(t^2)$, but $u$ is not twice differentiable at $0$. 
\end{example}

\begin{proof}[Proof of Lemma \ref{prop:punctual-versus-twice}]
For $|h|$ small, let
$$
 v(h):=u(x_0+h)-u(x_0)- D u(x_0)\cdot h- 2^{-1} h^TAh,
$$
where $A$ is given by the $C^{2,0}$ expansion of $u$ at $x_0$. 
It follows that $v(h)=o(\abs{h}^2)$ as $h\to 0$.
Since $u\in C^{1,1}_{\mathrm{loc}}$, we have $v\in C^{1,1}$ in
a neighborhood of $0$.  It remains
to prove $Dv(h)=o(\abs{h})$.

Fix a $\delta>0$ small. We may assume $$ \omega(\epsi)\coloneqq \sup_{B_\epsi\setminus\{0\}} \frac{|v(h)|}{|h|^2}>0\quad \text{for every }0<\epsi<2\delta, $$
since otherwise the desired conclusion is clear. By $v(h)=o(|h|^2)$,  we have $\omega(\epsi)\downarrow 0$ as $\epsi\downarrow 0$. 
After shrinking $\delta$ if necessary, we may assume $\omega(2\delta)<1$.
For $h\in B_\delta\setminus\{0\}$, let $t\coloneqq |h|\sqrt{\omega(2|h|)}$. Let $L$ be a Lipschitz constant of $Dv$ in $B_{2\delta}$. Then, for each unit vector $e$, we obtain
$$t\abs{D v(h)\cdot e} \le \abs{v(h+t e)}+ \abs{v(h)}+L t^2\le (5+L) t^2,\quad h\in B_\delta\setminus\{0\}. $$
Hence, $\abs{D v(h)}\le (5+L) |h| \sqrt{\omega(2|h|)}$ for all $0<|h|<\delta$. Thus $Dv(h)=o(|h|)$. 
\end{proof}

\begin{lemma}\label{lem:punctual-implies-second}
Let $F:\Sym(n)\to\R$ satisfy \eqref{eq:UE}, and let $x_0\in B_1$.  
If $u$ is a continuous
viscosity solution of \eqref{equ-u}, then $u$ is twice differentiable at $x_0$ if and only if $u$ has a $C^{2,0}$ expansion at $x_0$.
\end{lemma}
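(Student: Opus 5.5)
The two implications are of very different depth. One direction is elementary and holds for any function. The other direction needs the equation, through interior $C^{1,\gamma}$ regularity and Lemma \ref{prop:punctual-versus-twice}, or more precisely through the mechanism in its proof.

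\emph{Twice differentiable $\Rightarrow$ expansion.} Here twice differentiability at $x_0$ means that $u$ is differentiable near $x_0$ and $Du$ is differentiable at $x_0$. Put
$$v(h)=u(x_0+h)-u(x_0)-Du(x_0)\cdot h-2^{-1}h^TD^2u(x_0)h.$$
Then $Dv(h)=o(\abs{h})$, and the mean value theorem along the segment $[0,h]$ gives $\abs{v(h)}\le \abs{h}\sup_{\abs{k}\le\abs{h}}\abs{Dv(k)}=o(\abs{h}^2)$. This is the $C^{2,0}$ expansion, and it uses nothing about the equation.

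\emph{Expansion $\Rightarrow$ twice differentiable.} By interior estimates for viscosity solutions, $u\in C^{1,\gamma}_{\mathrm{loc}}$, and Example \ref{ex-b2} shows that $C^{1,\gamma}$ alone is not enough, so the equation must be used again. Set $Q(h)=u(x_0)+p\cdot h+2^{-1}h^TAh$ for the expansion, and $v(h)=u(x_0+h)-Q(h)$, so that $v=o(\abs{h}^2)$. By differentiating the expansion (using $u\in C^1$), the linear coefficient is $p=Du(x_0)$. The key observation is that $v$ solves $\widetilde F(D^2v)=0$ with $\widetilde F(M)=F(M+A)$, which is uniformly elliptic with the same constants. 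We do not need $\widetilde F(0)=0$: a constant right-hand side is harmless, and in any case $F(A)=0$ follows from the expansion by a standard test-function argument with $Q\pm\epsilon\abs{h}^2$.

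Now rescale. For small $r$ let $v_r(z)=r^{-2}v(rz)$ on $B_1$. Then $v_r$ solves $\widetilde F_r(D^2v_r)=0$ with $\widetilde F_r=\widetilde F$, since the rescaling is pure second order, and $\norm{v_r}_{L^\infty(B_1)}=r^{-2}\sup_{B_r}\abs{v}=:\omega(r)\to0$. The interior $C^{1,\gamma}$ estimate \cite[Corollary 5.7]{Caffarelli1995FullyNE} applies to solutions of uniformly elliptic equations with a constant right-hand side $\widetilde F(0)=F(A)=0$. It gives
$$\norm{Dv_r}_{L^\infty(B_{1/2})}\le C\norm{v_r}_{L^\infty(B_1)}=C\,\omega(r),$$
that is, $\abs{Dv(h)}\le C\omega(2\abs{h})\abs{h}$ for $\abs{h}=r/2$. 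Hence $Du(x_0+h)=Du(x_0)+Ah+o(\abs{h})$, so $Du$ is differentiable at $x_0$ with derivative $A$, and $u$ is twice differentiable at $x_0$.

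The main point to check is the interior gradient estimate without an additive constant. The estimate in \cite{Caffarelli1995FullyNE} reads $\norm{Dv_r}\le C(\norm{v_r}_\infty+\abs{\widetilde F(0)})$, so it is essential that $\widetilde F(0)=F(A)=0$; this is why I establish $F(A)=0$ first. If only an estimate in terms of oscillation and $\abs{\widetilde F(0)}$ were available, this step gives exactly what is needed. Alternatively, I could mimic the proof of Lemma \ref{prop:punctual-versus-twice}, using $C^{1,\gamma}$ bounds on $v_r$ in place of the Lipschitz bound on $Dv$.
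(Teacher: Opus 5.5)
Your proof is correct and follows essentially the same route as the paper: you derive $F(A)=0$ from the test functions $Q\pm\epsilon\abs{h}^2$, rescale to $v_r(z)=r^{-2}v(rz)$, and apply the interior $C^1$ estimate to $F_A(M)=F(A+M)$. That last step works only because $F_A(0)=0$, so you should drop your early remark that this is not needed. The one cosmetic difference is that you evaluate at $\abs{h}=r/2$, i.e.\ on $\partial B_{1/2}$, which is fine by continuity of $Dv_r$; the paper takes $r=3\abs{h}$ to stay inside $B_{1/2}$.
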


\begin{proof}
The interior $C^{1,\alpha}$ regularity in 
\cite[Corollary~5.7]{Caffarelli1995FullyNE} implies $u\in C^{1}(B_1)$. Hence, the implication from twice
differentiability to the existence of a $C^{2,0}$ expansion is clear. Conversely, suppose that
$u(x_0+h)= Q(h) +o(\abs{h}^2)$ as $h\to 0$, where $Q(h)\coloneqq u(x_0)+Du(x_0)\cdot h+2^{-1} h^TAh$. 
  We prove that $u$ is
twice differentiable at $x_0$.
For every
$\epsi>0$, $Q+\epsi\abs{h}^2$ and
$Q-\epsi \abs{h}^2$ touch $u(x_0 + \cdot)$ at $0$ from above and
below, respectively. Since $u$ is a viscosity solution of \eqref{equ-u}, we obtain 
$F(A+2\epsi I_n)\geq 0$ and $F(A-2\epsi I_n)\leq 0$.  Sending $\epsi$ to zero, we obtain $F(A)=0$.
For $0<r<\operatorname{dist}(x_0,\partial B_1)$, let
$$
 \omega(r):=\sup_{h\in B_1\setminus\{0\}}\frac{\abs{v_r(h)}}{\abs{h}^2},
 \quad\text{and}\quad 
 v_r(h):=\frac{{u(x_0 +r h)-Q(rh)}}{r^2},~h~\in B_1.
$$
Then $\omega(r)\to0$ and
$\norm{v_r}_{L^\infty(B_1)}\leq\omega(r)$.  Moreover, $v_r$ solves
$F_A(D^2v_r)=0$ in $B_1$, where $F_A(M):=F(A+M)$.  This operator has
the same ellipticity constants as $F$ and satisfies $F_A(0)=0$.
Therefore, the interior $C^1$ estimate gives
$$
 \|Dv_r\|_{L^\infty(B_{1/2})}
 \leq C\|v_r\|_{L^\infty(B_1)}
 \leq C\omega(r)
 =o(1)
 \quad\text{as }r\to 0^+.
$$
For $h\neq 0$ sufficiently small, take $r=3\abs{h}$ and
$y=h/r\in B_{1/2}$.  Since
$Dv_r(y)=r^{-1}\bigl(Du(x_0+h)-Du(x_0)-Ah\bigr)$, it follows that
$Du(x_0+h)=Du(x_0)+Ah+o(\abs{h})$.  Thus $u$ is twice
differentiable at $x_0$.
\end{proof}

Recall that a subset $E$ of $\R^n$ is called a
\emph{$G_{\delta\sigma}$ set} if it is a countable union of $G_\delta$ sets, where a \emph{$G_\delta$ set} is a countable
intersection of open sets in $\R^n$.

\begin{lemma}\label{Lem-G-set}
If $u\in C^0(B_1)$, then $\Sigma_{2,\alpha}(u)$ is a
$G_{\delta\sigma}$ set for every $0\leq\alpha<1$.
\end{lemma}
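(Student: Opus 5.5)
The plan is to express the complement of $\Sigma_{2,\alpha}(u)$ through countably many conditions, each involving only finitely many values of $u$ on a ball, and then check the topological type. By definition, $x\notin\Sigma_{2,\alpha}(u)$ iff there is $Q\in\cP_2$ with $u(x+h)-Q(x+h)=o(\abs{h}^{2+\alpha})$. The existential quantifier over $Q$ is uncountable, so I will first remove it by a Cauchy-type criterion in the spirit of Lemma~\ref{lem:decreasing-scales}, using the best quadratic approximation error $\omega_u(x,r)$.

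\textbf{Step 1: a criterion without $Q$.} I claim that $x\notin\Sigma_{2,\alpha}(u)$ if and only if
$$
 \omega_u(x,r)=o(r^{\alpha})\quad\text{as } r\to0^+ .
$$
Necessity is immediate: use the expansion polynomial as a competitor in the infimum.

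For sufficiency, I will rerun the telescoping argument of Lemma~\ref{lem:decreasing-scales} along the scales $r_k=2^{-k}r_0$. This time the hypothesis is $\omega_u(x,r_k)\le \epsilon_k r_k^\alpha$ with $\epsilon_k\to0$, in place of $k^{-2}r_k^\alpha$. Applying Lemma~\ref{lem:quadratic-polynomial-estimate} to consecutive minimizers gives coefficient differences bounded by $C\epsilon_k r_k^{\alpha}$ (respectively times $r_k^2$ and $r_k$). The tail sums $\sum_{j\ge k}\epsilon_j r_j^\alpha$ are $o(r_k^\alpha)$ because $r_j^\alpha$ decays geometrically when $\alpha>0$.

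The case $\alpha=0$ is the main obstacle, since the geometric decay is lost and the tail sums need not converge. There I would instead compare $P_k$ with $P_j$ directly on $B_{r_j}$ for $j>k$. That comparison gives only $\norm{A_j-A_k}\lesssim \sup_{i\ge k}\epsilon_i\cdot(j-k)$, which is also insufficient.

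To avoid this, I will rephrase the criterion so that it does not need the limit to be found explicitly. The fallback characterization is: there exists a rational matrix sequence approach, namely $x\notin\Sigma_{2,\alpha}$ iff for every rational $\epsilon>0$ there is a rational $\delta>0$ and a rational quadratic $Q$ such that
$$
\norm{u-Q}_{L^\infty(B_r(x))}\le \epsilon r^{2+\alpha}\quad\text{for all } r<\delta .
$$
Approximating a true polynomial by rational ones costs only a bounded perturbation on small balls. That is not uniform in $r$, so I will instead allow $Q$ to range over $\cP_2$ and use compactness. The intermediate statement to aim for is the following: the set
$$
U(\epsilon,\delta):=\bigl\{x:\exists Q\in\cP_2,\ \abs{u(y)-Q(y)}\le \epsilon\abs{y-x}^{2+\alpha}\ \forall y\in B_\delta(x)\cap B_1\bigr\}
$$
is relatively closed in $B_1$. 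Its $Q$'s are bounded by Lemma~\ref{lem:quadratic-polynomial-estimate} locally uniformly, so limits of such $Q$ along $x_j\to x$ work, using continuity of $u$.

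\textbf{Step 2: topological type.} With this, I will show
$$
B_1\setminus\Sigma_{2,\alpha}(u)=\bigcap_{\epsilon\in\mathbb Q_{>0}}\ \bigcup_{\delta\in\mathbb Q_{>0}} U(\epsilon,\delta).
$$
For the inclusion "$\supseteq$", the polynomials $Q_\epsilon$ obtained for different $\epsilon$ agree up to $O(\epsilon)$ in the scaled norms, by Lemma~\ref{lem:quadratic-polynomial-estimate} applied on $B_r(x)$ for small $r$. They therefore converge to a single $Q$ giving the expansion, and this works even for $\alpha=0$, since the estimate holds for all small $r$ at once. The inclusion "$\subseteq$" is immediate.

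Each $U(\epsilon,\delta)$ is closed in $B_1$, hence $F_\sigma$, so the complement of $\Sigma_{2,\alpha}(u)$ is $F_{\sigma\delta}$. Taking complements in $B_1$ (an open set, itself $F_\sigma$) shows that $\Sigma_{2,\alpha}(u)$ is $G_{\delta\sigma}$ in $\R^n$. The main care is in the closedness of $U(\epsilon,\delta)$: as $x_j\to x$, points $y\in B_\delta(x)$ lie in $B_\delta(x_j)$ for large $j$, and the inequality passes to the limit.
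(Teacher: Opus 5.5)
Your final argument is correct and is essentially the paper's proof. That argument consists of the relatively closed sets $U(\epsilon,\delta)$, the identity $B_1\setminus\Sigma_{2,\alpha}(u)=\bigcap_{\epsilon}\bigcup_{\delta}U(\epsilon,\delta)$, and the comparison on shrinking balls, which shows the constant and linear coefficients coincide while for $\alpha=0$ the Hessians form a Cauchy family; in the paper the closed sets $E_{m,k}$ (with $x$ restricted to $\overline B_{1-2/k}$ so they are closed in $\R^n$) play the role of your $U(1/m,1/k)$. The Step~1 criterion you abandoned is actually false for $\alpha=0$ (e.g.\ $u(x)=x_1^2\sin(\log\log(1/\abs{x_1}))$ near the origin has $\omega_u(0,r)\le(2e\log(1/r))^{-1}\to0$ but no $C^{2,0}$ expansion at $0$), so dropping it was necessary, and nothing in your final argument depends on it.
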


\begin{proof}
Let $\mathcal G_{2,\alpha}(u):=B_1\setminus\Sigma_{2,\alpha}(u)$.  For
$m\geq1$ and $k\geq3$, let $E_{m,k}$ be the set of points
$x\in\overline B_{1-2/k}$ for which there are $p\in\R^n$ and
$A\in\Sym(n)$ satisfying
$$
 \left|u(x+h)-u(x)-p\cdot h-2^{-1} h^TAh \right|
 \leq  m^{-1} \abs{h}^{2+\alpha}
 \quad\text{whenever }\abs{h}\leq k^{-1}.
$$
Each $E_{m,k}$ is closed.  Indeed, suppose
$x_j\in E_{m,k}$ and $x_j\to x$, and choose corresponding
$p_j,A_j$.  Let $r=1/(2k)$. Evaluating at $h=\pm re$ shows, uniformly
for unit vectors $e$, that
$$
 2r\abs{p_j\cdot e}
 \leq\abs{u(x_j+re)-u(x_j-re)}+ 2 m^{-1} r^{2+\alpha},\quad
\text{and}
$$
$$
 r^2\abs{e^TA_je}
 \leq\abs{u(x_j+re)+u(x_j-re)-2u(x_j)}
      + 2m^{-1} r^{2+\alpha}.
$$
Continuity of $u$ on a compact subset of $B_1$ therefore
bounds $p_j$ and $A_j$.  Passing to a convergent subsequence and then
to the limit in the defining inequality proves that $x\in E_{m,k}$.

It remains to prove the following claim:
$$
 \mathcal G_{2,\alpha}(u)
 =\cap_{m\ge 1}\cup_{k\ge 3} E_{m,k}.
$$
The forward inclusion follows directly from the definition of a
$C^{2,\alpha}$ expansion.  Conversely, suppose that $x$ belongs to
the right hand side.  For every $m$, choose $k_m,p_m,A_m$ as in the
definition of $x\in E_{m,k_m}$. 
Let $m,\ell\geq1$, $0<t<\min\{1/k_m,1/k_\ell\}$, and let $e$ be a unit vector. Then comparison at $h=te$ and $h=-te$
gives
$$
 \abs{(p_m-p_\ell)\cdot e}
 \leq\left(m^{-1}+\ell^{-1}\right)t^{1+\alpha}
 \quad\text{and}\quad 
 \abs{e^T(A_m-A_\ell)e}
 \leq2\left( m^{-1}+ \ell^{-1}\right)t^\alpha.
$$
Letting $t\downarrow0$ in the first inequality shows that all $p_m$
are equal to a single vector $p$.  If $\alpha>0$, the second inequality
similarly shows that all $A_m$ are equal to a single matrix $A$.  If
$\alpha=0$, it instead gives
$\norm{A_m-A_\ell}_F\leq2\sqrt n(1/m+1/\ell)$, so $A_m\to A$ and
$\norm{A_m-A}_F\leq2\sqrt n/m$.  In either case, the defining
inequality for $x\in E_{m,k_m}$ implies, for $\abs{h}\leq1/k_m$,
$$
 \left|u(x+h)-u(x)-p\cdot h- 2^{-1} h^TAh\right|
 \le
 (1+\sqrt n)m^{-1}\abs{h}^{2+\alpha}.
$$
Since $m$ is arbitrary, $u$ has a $C^{2,\alpha}$ expansion at $x$,
which proves the claim.
\end{proof}

\section*{Acknowledgment}
I would like to thank my thesis adviser, Professor YanYan Li, for introducing Question \ref{Q-1} to me and encouraging me to think about it when I was a second-year PhD student at Rutgers University.

\end{document}